\def \ve{\varepsilon} 
\def \del{\delta}
\def \ved{{\varepsilon,\del}}%
\def \KSD{\K^{\star, \delta}}
\def \lm{\lambda}
\newcommand{\Ab}[1]{(A.#1)}
\def \gr{\nabla}
\def \pt{\partial}
\def \to{\rightarrow}
\def \tow{\rightharpoonup}
\def \eqdef{\stackrel {\rm def} {=}}
\def \be{\begin{equation}}
\def \ee{\end{equation}}
\def \ds{\displaystyle}
\def \R{{\mathbb R}}  
  \def \K{{\mathbb K}}
\def \bs{\boldsymbol}
\newtheorem{theorem}{Theorem}[section]
\newtheorem{proposition}[theorem]{Proposition}
\newtheorem{corollary}[theorem]{Corollary}
\newtheorem{lemma}{Lemma}[section]
\newtheorem{definition}[theorem]{Definition}
\title{Fully homogenized model for immiscible incompressible two-phase flow
through heterogeneous porous media with thin fractures}
\author[1]{Mladen Jurak}
\author[2]{Leonid Pankratov}
\author[1]{Anja Vrba\v{s}ki}
\affil[1]{Faculty of Science, University of Zagreb, Bijeni\v{c}ka 30, 10000 Zagreb, Croatia}
\affil[2]{Department of Mathematics, B. Verkin Institute for Low Temperature Physics and Engineering, 47, av. Lenin, 61103 Kharkov, Ukraine}
\begin{document}

\maketitle

\renewcommand{\thefootnote}{\fnsymbol{footnote}}
\footnotetext{Email addresses: {\tt jurak@math.hr} (Mladen Jurak), {\tt leonid.pankratov@univ-pau.fr} (Leonid Pankratov),
{\tt avrbaski@math.hr} (Anja Vrba\v{s}ki).}
\footnotetext{Partially supported by Ministry of Science, Education and Sports of Republic of Croatia, grant 37-1193086-3226.}
\renewcommand{\thefootnote}{\arabic{footnote}}

{\bf Abstract}.
In this paper we discuss a model describing global behavior of the two phase incompressible flow in fractured
porous media. The fractured media is regarded as a porous medium consisting of two superimposed continua,
a connected fracture system, which is assumed to be thin of order $\ve\del$,
and an $\ve$--periodic system of disjoint matrix blocks.
We derive global behavior of the fractured media by passing to the limit as $\ve\to 0$ and
then as the relative fracture thickness $\del \to 0$, taking into account
that the permeability of the blocks is proportional to $(\ve\del)^2$,
while permeability of the fractures is of order one.
The macroscopic model obtained is then a fully homogenized model, i.e.,
where all the coefficients are calculated in terms of given data and
do not depend on the additional coupling or cell problems. \\
{\bf Keywords}. Homogenization, incompressible two-phase flow, double porosity media, thin fissures. \\
{\bf 2010 Mathematics Subject Classification}. 35B27, 35K65, 35Q35, 74Q15, 76M50, 76S05.

\section{Introduction}
\label{sect:intro}
\setcounter{equation}{0}
A naturally fractured reservoir is a reservoir that contains fracture planes distributed as a
connected network throughout the reservoir.
This type of porous medium is frequently encountered in hydrology and petroleum applications,
for instance the sedimentary rock that composes a hydrocarbon reservoir.
The fluid flow mechanism in such reservoirs has been known to be significantly
different from that of an ordinary, unfractured reservoir.
Specifically, the flow occurs as if the reservoir possessed two porous structures,
one associated to the porous rock, and the other one to the system of fractures.
Accordingly, a naturally fractured reservoir is considered as a porous medium consisting
of two superimposed continua, a discontinuous system of periodically distributed
matrix blocks surrounded by a connected system of thin fissures.
Characteristic features of fractured rocks are that the volume occupied by
the fractures is much smaller than the volume of the pores;
the matrix keeps most of the fluid while the fractures are notably more permeable (see \cite{BZK}).
The fluid exchange between matrix blocks and fractures is a microscale process
whose strong influence on the flow must be embedded in a large scale flow description.
The macroscopic behavior of fluid flow in such porous medium is described by
the so-called double porosity model which was first derived experimentally
as a physical notion and described by several authors in the engineering literature (\cite{BZK}, \cite{WR}).
In standard double porosity model one assumes that
the width of the fractures is of the same order as the block size.
However, the model of \cite{BZK} assumes that
the measure of the fracture set is small with respect to the measure of the pore blocks.
One of the approaches in modeling such problems is therefore
to consider the thickness of the fractures as an additional small parameter.
In this work we consider a double porosity type model
for two-phase incompressible fluid flow in a porous medium with thin fractures.

The first contribution on the derivation of the double porosity model for two-phase
flow in a fractured medium is \cite{ADH90}, where the effective equations of the double porosity
model are established by formal technique of asymptotic expansion for the cases of completely
miscible incompressible flow, and immiscible incompressible two-phase flow.
The double porosity model for immiscible incompressible two-phase flow in a reduced pressure formulation
is rigorously justified by periodic homogenization in \cite{BLM}.
Another result on the two-phase incompressible immiscible flow in fractured porous media is established
in \cite{Yeh06}.
For the displacement of one compressible miscible fluid by another in a naturally fractured reservoir,
the double porosity model was rigorously derived in \cite{Choq1}.
Furthermore, \cite{Arb-exist} and \cite{Yeh02} study the existence of weak
solutions for the two models of the immiscible two-phase flow in fractured porous media.

The method involving only one small parameter $\ve$ in modeling of the thin structures,
now known as method of mesoscopic energy characteristics,
was proposed by E. Khruslov (see, e.g., \cite{MK}).
The method of two small parameters in modeling of periodic thin structures
has been widely used in the mathematical literature
(see, e. g., \cite{BakhPan}, \cite{Cio-Pau}) and applied to various linear elliptic problems.
An important notion of convergence with respect to two small parameters
was introduced by G. Panasenko in \cite{Pan}.
Commutativity of the scheme of the passage to the limit
for two small parameters, $\ve$ and $\del$, was discussed in \cite{Griso, Griso-1} and \cite{BCP}.
However, all these works study problems with the coefficients which are uniformly bounded
and elliptic with respect to the small parameters.
The first result on homogenization of a linear double porosity problem in the case of thin fissures
was obtained in \cite{PR} where the thickness of the fissures as well as
the order of the permeability in the matrix blocks were modeled by one small parameter $\ve$.
That result was recently generalized in \cite{Moj} where several applications were studied.
Independently a singular double porosity model was proposed in \cite{BCP}.
The method of two small parameters $\ve$ and $\del$ for the linear double porosity model
was proposed in \cite{ABGP} and then used in \cite{AGP}
for the homogenization of a degenerate triple porosity model with thin fissures and
in \cite{APP2} for the homogenization of a single phase flow through a porous medium in a thin layer.
Most of these results were presented in the review paper \cite{APR}.
On the other hand, the nonlinear elliptic double porosity type problem for the
fissure set which is not asymptotically small was studied in \cite{PP},
while the nonlinear elliptic double porosity type problem
in domains with thin fissures was studied in \cite{APP1}.
The main feature of the double porosity models with thin fissures, compared to the standard double porosity models,
is that such models do not contain any coupling between the meso- and macro-scale
through the coefficients that depend on additional cell problems.

This paper contains a new homogenization result for the system modeling
immiscible incompressible two-phase flow in a periodic fractured porous medium with thin fractures,
modeled by the two small parameters. The first one, $\ve$, stands for the periodicity of the structure,
and the second one, $\del$, describes the relative thickness of the fissure system.

The paper is organized in the following way. In Section \ref{sect:mesomodel} we set
up the problem which describes the model on the mesoscale (the Darcy scale)
with the coefficients depending on $\ve$ and $\del$.
Then in Section \ref{sect:deltamodel} we present the global double porosity $\del$-model
which has been derived earlier  in \cite{BLM}, \cite{Yeh06} from the mesoscopic problem
and we present a derivation of the imbibition equation.
Section \ref{sect:imb-subsect:linear} is devoted to decoupling the global $\delta$-model
from the system defined on a matrix cell: following \cite{Arb-simpl},
we linearize the imbibition equation and estimate its asymptotic behavior by using the Laplace transform.
Passage to the limit as $\del\to0$ in the global double porosity $\del$-model is performed in
Section \ref{sect:pass_delta}.
Namely, in Subsection \ref{sect:pass_delta-subsect:unif_apr_est}
we obtain the  {\em a priori} estimates for the weak solutions of the problem
with respect to the space and time variables and establish a necessary compactness result.
The main difficulty in derivation of uniform {\em a priori} estimates
is in treatment of the convolution term. In this paper this term is estimated
without an additional step of discretization of the time derivative.
Finally, Subsection \ref{sect:full_homogen} exhibits global fully homogenized model
for immiscible incompressible two-phase flow in double porosity media with thin fractures;
namely, in the limit as $\del\to 0$ we obtain the following integro-differential system
with constant porous medium coefficients and with an additional source term
of the convolution type:
\begin{equation*}
\label{final-hom}
\left\{
\begin{array}[c]{ll}
\ds
\Phi_f \frac{\pt S_f}{\pt t}
-{\rm div}\, \left( \frac{d-1}{d}\, k_f\, \lm_{w,f} (S_f) \gr P_{w,f} \right) =
- \frac{C_m}{d}\,\frac{\pt}{\pt t}\, \big[\,\big(\mathcal{P}(S_f) - \mathcal{P}(S_f^{0}) \big)
\ast \frac{1}{\sqrt{t}} \big], \\[6mm] 
\ds
-\Phi_f \frac{\pt S_f}{\pt t}
-{\rm div}\, \left( \frac{d-1}{d}\, k_f\, \lm_{n,f} (S_f) \gr P_{n,f} \right) =
\frac{C_m}{d}\,\frac{\pt}{\pt t}\, \big[\,\big(\mathcal{P}(S_f) - \mathcal{P}(S_f^{0}) \big)
\ast \frac{1}{\sqrt{t}} \big],
\end{array}
\right.
\end{equation*}
where the involved parameters are defined in terms of the mesoscale  parameters.

Up to our knowledge this is the first rigorous justification of fully homogenized double porosity
model in the framework of the two-phase flow in a reservoir with thin fissures system.

\section{Mesoscale model}
\label{sect:mesomodel}
\setcounter{equation}{0}

We start from a mesoscopic model of the two-phase incompressible flow
defined in a domain with periodic structure, representing a naturally fractured reservoir.
We consider a bounded Lipschitz domain $\Omega \subset \mathbb{R}^d$ ($d = 2, 3$)
which is a union of disjoint cubes congruent to a reference cell $Y = (0, 1)^d$.
The reference cell $Y$ consists of two subdomains,
corresponding to the two types of rock - the matrix, and the fractures.
Moreover, we suppose the relative fracture thickness to be of order $\del$,
where $\del > 0$ is a small parameter. 
In particular, we use the standard Warren-Root model which assumes
that $Y$ consists of an open cube $Y_m^{\del}$ with edge length $1 - \del$,
centered at the center of $Y$, completely surrounded by a connected fracture subdomain $Y_f^{\del}$,
with a piecewise smooth internal boundary $\Gamma^\del$ between the two media in $Y$.
Therefore it is $Y = Y_m^{\del} \cup \Gamma^\del \cup Y_f^{\del}$,
where $|Y_f^{\del}| = O(\del)$ so that $|Y_f^{\del}|\to 0$ as $\del\to0$.
The outward unit normal vector to $Y_m^{\del}$ is denoted by $\bs{\nu}^{\del}$.

The periodic structure of a reservoir is depicted by a small parameter $\ve>0$
representing the characteristic size of the heterogeneities with respect to the size of $\Omega$.
Accordingly, for $\ve>0$ the domain $\Omega$ is assumed to be covered by a pavement of cells $\ve Y$.
For $\del>0$ let ${\bf 1}_m^{\del}(y)$ and ${\bf 1}_f^{\del}(y)$ be
the characteristic functions of $Y_m^{\del}$ and $Y_f^{\del}$, respectively,
extended $Y$--periodically to the whole $\mathbb{R}^d$.
The system of the matrix blocks in $\Omega$, the fractured part of $\Omega$
and the matrix-fracture interface are denoted by $\Omega^\ved_m$, $\Omega^\ved_f$
and $\Gamma^\ved$, respectively. Hence we have
\begin{equation}
\label{omeg12}
\begin{split}
& \Omega^\ved_m \eqdef \left\{x \in \Omega \, :\,
{\bf 1}_m^{\del}\left(\frac{x}{\ve}\right) = 1\right\}, \quad
\Omega^\ved_f \eqdef \left\{x \in \Omega \, :\,
{\bf 1}_f^{\del}\left(\frac{x}{\ve}\right) = 1\right\}
= \Omega \setminus \overline{\Omega^\ved_m}, \\
& \Gamma^\ved \eqdef \pt \Omega^\ved_f \cap \pt \Omega^\ved_m \cap \Omega.
\end{split}
\end{equation}
For simplicity, we assume that $\Omega^\ved_m \cap \pt \Omega = \emptyset$.

\begin{figure}[H]

\begin{tikzpicture}[scale=0.8]
\def\rectanglepath{-- ++(1.3cm,0cm)  -- ++(0cm,1.3cm) -- ++(-1.3cm,0cm) -- cycle}

\filldraw[fill=white!30] (-0.1,-0.1) -- (5.9,-0.1)
                                         --  (5.9, 5.9) -- (-0.1, 5.9) -- cycle;

\filldraw[fill=lightgray] (0,  0) \rectanglepath;
\filldraw[fill=lightgray] (1.5,0) \rectanglepath;
\filldraw[fill=lightgray] (3,  0) \rectanglepath;
\filldraw[fill=lightgray] (4.5,0) \rectanglepath;

\filldraw[fill=lightgray] (0,  1.5) \rectanglepath;
\filldraw[fill=lightgray] (1.5,1.5) \rectanglepath;
\filldraw[fill=lightgray] (3,  1.5) \rectanglepath;
\filldraw[fill=lightgray] (4.5,1.5) \rectanglepath;

\filldraw[fill=lightgray] (0,  3.0) \rectanglepath;
\filldraw[fill=lightgray] (1.5,3.0) \rectanglepath;
\filldraw[fill=lightgray] (3,  3.0) \rectanglepath;
\filldraw[fill=lightgray] (4.5,3.0) \rectanglepath;

\filldraw[fill=lightgray] (0,  4.5) \rectanglepath;
\filldraw[fill=lightgray] (1.5,4.5) \rectanglepath;
\filldraw[fill=lightgray] (3,  4.5) \rectanglepath;
\filldraw[fill=lightgray] (4.5,4.5) \rectanglepath;

\draw[<-] (0.5,5.5) -- (0.5,6.5) node[anchor=south] {$\Omega_m^{\varepsilon,\delta}$};
\draw[<-] (0.5,4.4) -- (-0.5,4.4) node[anchor=east] {$\Omega_f^{\varepsilon,\delta}$};

\draw (1.4,6.0)--(1.4,6.3);
\draw (2.9,6.0)--(2.9,6.3);
\draw[<->] (1.4,6.15)--(2.9,6.15);
\draw (2.2,6.3) node {$\varepsilon$};

\draw (6.0,5.9)--(6.3,5.9);
\draw (6.0,4.4)--(6.3,4.4);
\draw[<->] (6.15,4.4)--(6.15,5.9);
\draw (6.4,5.15) node {$\varepsilon$};

\draw (4.5,6.0)--(4.5,6.3);
\draw (4.3,6.0)--(4.3,6.3);
\draw[->] (3.9,6.15)--(4.3,6.15);
\draw[<-] (4.5,6.15)--(4.9,6.15);
\draw (4.4,6.55) node {$\varepsilon\delta$};

\draw (6.0,3.0)--(6.3,3.0);
\draw (6.0,2.8)--(6.3,2.8);
\draw[->] (6.15,2.4)--(6.15,2.8);
\draw[<-] (6.15,3.0)--(6.15,3.4);
\draw (6.6,2.9) node {$\varepsilon\delta$};

\node [below=0.5cm]
at (3.0,0.2)
{$a)$};


\def\rectanglepath{-- ++(1.3cm,0cm)  -- ++(0cm,1.3cm) -- ++(-1.3cm,0cm) -- cycle}

\draw[fill=white!30] (8.5,-0.1) -- (11.5,-0.1) -- (11.5,2.9) -- (8.5,2.9) -- cycle;
\draw[fill=lightgray]   (8.7,0.1) -- (11.3,0.1) -- (11.3,2.7) -- (8.7,2.7) -- cycle;

\draw[->,thin] (8.5,-0.1) -- (12.5,-0.1)  node[anchor=north] {$y_1$};
\draw[->,thin] (8.5,-0.1) -- (8.5,3.9)  node[anchor=east] {$y_d$};
\draw (11.5,-0.1) node[anchor=north] {$1$};
\draw (8.5,2.9) node[anchor=east] {$1$};

\draw[<-] (9.5,2.0) -- (9.5,3.5) node[anchor=south] {$Y_m^{\delta}$};
\draw[<-] (10.5 ,2.8) -- (10.5,3.5) node[anchor=south] {$Y_f^{\delta}$};
\draw[<-] (11.3 ,1.1) -- (12 ,1.1) node[anchor=west] {$\Gamma^{\delta}$};

\draw (11.6,2.9)--(11.9,2.9);
\draw (11.6,2.7)--(11.9,2.7);
\draw[->] (11.75,3.3)--(11.75,2.9);
\draw[<-] (11.75,2.7)--(11.75,2.3);
\draw (12.4,2.8) node {$\delta/2$};

\draw (11.3,3.0)--(11.3,3.3);
\draw (11.5,3.0)--(11.5,3.3);
\draw[->] (11.8,3.15)--(11.5,3.15);
\draw[<-] (11.3,3.15)--(11.0,3.15);
\draw (11.4,3.6) node {$\delta/2$};

\node [below=0.5cm]
at (10.0,0.2)
{$b)$};

\end{tikzpicture}

\caption{a) The domain $\Omega$ with the microstructure. \quad b) The reference cell $Y$.}
\label{fig:ref}

\end{figure}
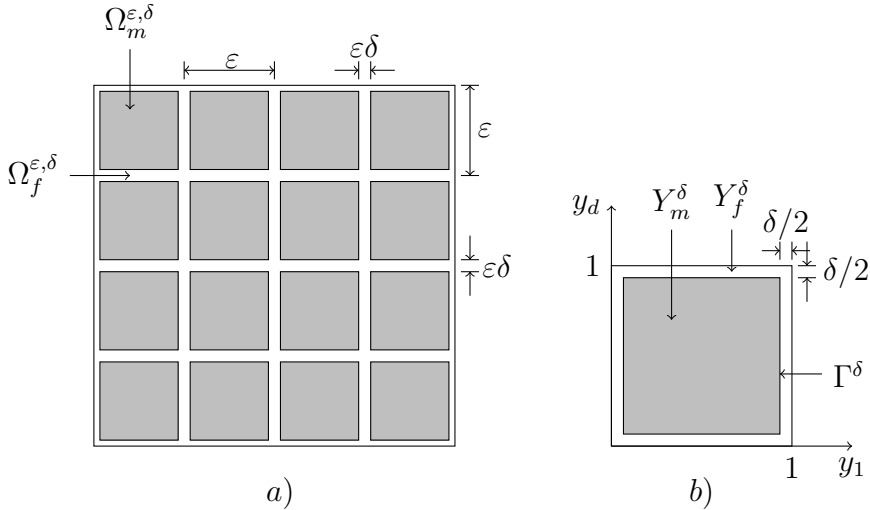

The domain boundary $\pt \Omega$ consists of two parts, $\Gamma_{inj}$ and $\Gamma_{imp}$,
such that $\Gamma_{inj} \cap \Gamma_{imp} = \emptyset$,
$\pt \Omega = \Gamma_{inj} \cup \Gamma_{imp}$.
We will use the following notation: $\ell = f, m$ and
$\Omega_T = \Omega \times (0, T)$, $\Omega^\ved_{\ell,T} = \Omega^\ved_\ell \times (0, T)$,
$\Gamma^\ved_{T} = \Gamma^\ved \times (0, T)$, where $T > 0$ is fixed.

In this work we study the incompressible two-phase  flow in porous medium $\Omega$ over the time interval $(0, T)$.
Let $S_{\ell}^\ved \eqdef S_{w, \ell}^\ved$, $S_{n,\ell}^\ved = 1- S_{w,\ell}^\ved$
be the saturations of the wetting and the non-wetting phase in $\Omega^\ved_{\ell,T}$, respectively;
$\lm_{w,\ell} = \lm_{w,\ell}(S_{\ell}^\ved)$,
$\lm_{n,\ell} = \lm_{n,\ell}(S_{\ell}^\ved)$ be the relative mobilities
of the wetting and the non-wetting phase in $\Omega^\ved_{\ell,T}$, respectively;
let $P_{w,\ell}^\ved$, $P_{n,\ell}^\ved$
be the pressures of the wetting and the non-wetting phase in $\Omega^\ved_{\ell,T}$, respectively.
Finally, let $\Phi^\ved(x)$ and $\K^\ved(x)$ be the porosity and
the absolute permeability tensor of the porous medium $\Omega$ set by
\begin{equation}
\label{fi-ka}
\Phi^\ved(x) \eqdef
\left\{
\begin{array}[c]{ll}
\Phi_f & \; {\rm in}\,\, \Omega^\ved_{f,T} \\[3mm]
\Phi_m & \; {\rm in}\,\, \Omega^\ved_{m,T}  \\
\end{array}
\right.
\; {\rm and } \;\;
\K^\ved(x) \eqdef
\left\{
\begin{array}[c]{ll}
k_f\, \mathbb{I} & \; {\rm in}\,\, \Omega^\ved_{f,T} \\[3mm]
(\ve\delta)^2\,\, k_m\, \mathbb{I} & \; {\rm in}\,\, \Omega^\ved_{m,T}  \\
\end{array}
\right.,
\end{equation}
where $\mathbb{I}$ is the unit tensor.

The mass conservation equations for the individual fluid phases in a subdomain $\Omega^\ved_{\ell,T}$,
$\ell = f,m$, are given by:
\begin{equation}
\label{debut1}
\left\{
\begin{array}[c]{rcl}
\ds
\Phi^\ved(x) \frac{\pt S_{\ell}^\ved}{\pt t} + {\rm div}\, \mathbf{q}^{\ved}_{w,\ell} & = & 0,
\\[3mm]
\ds
- \Phi^\ved(x) \frac{\pt S_{\ell}^\ved}{\pt t} + {\rm div}\, \mathbf{q}^{\ved}_{n,\ell} & = & 0,
\\[2mm]
\end{array}
\right.
\end{equation}
with the velocities of the wetting and the non-wetting phases
$\mathbf{q}^{\ved}_{w,\ell}$, $\mathbf{q}^{\ved}_{n,\ell}$ defined by the Darcy-Muskat's law
(see, e.g., \cite{JB-YB}, \cite{CJ}, \cite{Helm}):
\begin{equation}
\label{eq.qw-qn}
\mathbf{q}^{\ved}_{w,\ell} \eqdef -\K^\ved(x) \lm_{w,\ell}(S_{\ell}^\ved) \gr P_{w,\ell}^\ved
, \quad
\mathbf{q}^{\ved}_{n,\ell} \eqdef - \K^\ved(x) \lm_{n,\ell}(S_{\ell}^\ved)\gr P_{n,\ell}^\ved,
\end{equation}
where, for simplicity, the gravity effects are neglected. 

The system \eqref{debut1}-\eqref{eq.qw-qn} is closed by
the capillary pressure law in each of the medium subdomains,
\begin{equation}
\label{eq.pc1}
P_{c,\ell}(S_{\ell}^\ved) = P^\ved_{n,\ell} - P^\ved_{w,\ell},\quad \ell = f, m,
\end{equation}
where $P_{c,\ell}$ is a given capillary pressure-saturation function.

Due to \eqref{fi-ka}, \eqref{eq.qw-qn}, \eqref{eq.pc1},
the system \eqref{debut1} is now written in subdomain $\Omega^\ved_{f,T}$ as
\begin{equation}
\label{debut2-f}
\left\{
\begin{array}[c]{ll}
\ds
\Phi_f \frac{\pt S_f^\ved}{\pt t} -
k_f\,{\rm div}\, \bigg( \lm_{w,f} (S_f^\ved) \gr P_{w,f}^\ved \bigg) = 0, 
\\[6mm]
\ds
- \Phi_f \frac{\pt S_f^\ved}{\pt t} -
k_f\,{\rm div}\, \bigg( \lm_{n,f} (S_f^\ved) \gr P_{n,f}^\ved \bigg) = 0, 
\\[6mm]
P_{c,f}(S_f^\ved) = P_{n,f}^\ved - P_{w,f}^\ved, 
\\[2mm]
\end{array}
\right.
\end{equation}
and in subdomain $\Omega^\ved_{m,T}$ as
\begin{equation}
\label{debut2-m}
\left\{
\begin{array}[c]{ll}
\ds
\Phi_m \frac{\pt S_m^\ved}{\pt t} -
(\ve\delta)^2\,\, k_m\,{\rm div}\, \bigg( \lm_{w,m} (S_m^\ved) \gr P_{w,m}^\ved \bigg) = 0, 
\\[6mm]
\ds
- \Phi_m \frac{\pt S^\ved_m}{\pt t} -
(\ve\delta)^2\,\, k_m\,{\rm div}\, \bigg( \lm_{n,m} (S_m^\ved) \gr P_{n,m}^\ved \bigg) = 0, 
\\[6mm]
P_{c,m}(S_m^\ved) = P_{n,m}^\ved - P_{w,m}^\ved. 
\\[2mm]
\end{array}
\right.
\end{equation}

On the matrix-fracture interface $\Gamma^\ved$
the phase fluxes and pressures are required to be continuous:
\begin{equation}
\label{inter-condit}
\left\{
\begin{array}[c]{ll}
\mathbf{q}^{\ved}_{w,f} \cdot \bs{\nu}^{\ved} =
\mathbf{q}^{\ved}_{w,m} \cdot \bs{\nu}^{\ved} \,\,\, {\rm and} \,\,\,
\mathbf{q}^{\ved}_{n,f} \cdot \bs{\nu}^{\ved} =
\mathbf{q}^{\ved}_{n,m} \cdot \bs{\nu}^{\ved}
\quad {\rm on}\,\, \Gamma^\ved_{T}, \\[2mm]
P_{w,f}^\ved = P_{w,m}^\ved \,\, {\rm and} \,\, P_{n,f}^\ved = P_{n,m}^\ved
\quad {\rm on}\,\, \Gamma^\ved_{T}, \\
\end{array}
\right.
\end{equation}
where $\bs{\nu}^{\ved}$ is the unit outward normal vector to $\Gamma^\ved$, directed to $\Omega^\ved_f$.

The boundary conditions for the system \eqref{debut2-f} are given by:
\begin{equation}
\label{bc3}
\left\{
\begin{array}[c]{ll}
P_{w,f}^\ved = P_{w,\Gamma} \quad {\rm and} \quad P_{n,f}^\ved = P_{n,\Gamma} \quad {\rm on} \,\,
\Gamma_{inj} \times (0, T), \\[2mm]
\mathbf{q}^{\ved}_{w,f} \cdot \bs{\nu} =
\mathbf{q}^{\ved}_{n,f} \cdot \bs{\nu} = 0 \quad {\rm on} \,\,
\Gamma_{imp} \times (0, T),\\
\end{array}
\right.
\end{equation}
where $\bs{\nu}$ is the unit outward normal vector to $\pt \Omega$,
and $P_{\alpha, \Gamma}$, $\alpha=w,n$,  are given phase pressures.

The initial conditions read:
\begin{equation}
\label{init1}
S_{f}^\ved(x, 0) = S^{0}_f(x) \,\, {\rm in}\,\, \Omega^\ved_f \quad {\rm and}
\quad
S_{m}^\ved(x, 0) = S^{0}_m(x) \,\, {\rm in} \,\, \Omega^\ved_m.
\end{equation}

Let us now state the following assumptions on data.

\begin{itemize}
\item[\Ab{1}]  
The  porosity coefficients
$0 < \Phi_f,\, \Phi_m < 1$ are constants independent of $\ve$ and $\del$.

\item[\Ab{2}] 
The  absolute permeability coefficients
$0 < k_f, k_m$ are constants independent of $\ve$ and $\del$.

\item[\Ab{3}] 
The capillary pressure functions satisfy for $ \ell = f, m$:
$P_{c,\ell} \in C^1((0, 1]; \mathbb{R}^+)$,
$P_{c,\ell}^\prime(s) < 0$ in $(0, 1]$, $P_{c,m}(0^+) = P_{c,f}(0^+) \in (0, \infty],$
$P_{c,\ell}(1) = 0$.
Furthermore, the initial data \eqref{init1} are consistent in the sense that
$P_{c,m} (S_{m}^{0}) = P_{c,f} (S_{f}^{0})$ in $\Omega.$

\item[\Ab{4}] 
The relative phase mobility functions satisfy
$\lm_{w,\ell}, \lm_{n,\ell} \in C([0, 1]; \mathbb{R}^+)$,
$\lm_{w,\ell}(0) = \lm_{n,\ell}(1) = 0$; $0 \leqslant \lm_{w,\ell}, \lm_{n,\ell} \leqslant 1$ in $[0, 1]$;
$\lm_{w,\ell}$ is an increasing function in $[0, 1]$ and $\lm_{n,\ell}$ is a decreasing function in $[0, 1]$.
Moreover, there is a constant $L_0$ such that for all $s \in [0, 1]$,
$\lm_{\ell}(s) \eqdef  \lm_{w,\ell}(s) + \lm_{n,\ell}(s) \geqslant L_0 > 0.$
\end{itemize}

The known theory (see for instance \cite{BH1}, \cite{CP}, \cite{APP})
gives the existence of at least one weak solution to the problem
\eqref{debut2-f}-\eqref{init1} for fixed $\ve>0$, $\del>0$
under the conditions \Ab{1}--\Ab{4}
and some supplementary regularity of saturation functions (see \cite{CP}).

In the following we will use the function
\begin{equation}
\label{mathcalP-def}
\mathcal{P}(s) \eqdef (P_{c,m}^{-1} \circ P_{c,f}) (s)
\end{equation}
that is well defined, monotone increasing and bijective on $[0,1]$ due to \Ab{3}.

\section{Global double porosity $\del$-model}
\label{sect:deltamodel}
\setcounter{equation}{0}

In the case when the typical size of the fractures is
of the same order as the matrix block size, i.e. when $\del = O(1)$,
the homogenization process as $\ve \to 0$
for the mesoscopic model \eqref{debut2-f}-\eqref{init1}
has been studied by formal homogenization techniques in \cite{ADH91}, \cite{AMPP}, \cite{Panf},
and rigorously in \cite{BLM} and \cite{Yeh06}.
More precisely, in \cite{Yeh06} the homogenization procedure
for  problem \eqref{debut2-f}-\eqref{init1}
with a fixed $\del>0$ as $\ve \to 0$ was rigorously justified
by using the notion of the two-scale convergence \cite{All92}.
In this work various, rather strong assumptions were posed on the data
which exclude appearance of one-phase zones and thus degeneracy of the system.
On the other hand, the same type of result for the problem \eqref{debut2-f}-\eqref{init1}
in the global pressure formulation was established in \cite{BLM} 
under an assumption of continuity of the saturations and the global pressure
at the matrix-fracture boundary, but including possible one-phase zones.

We present now the global double porosity $\del$-model which was derived
in \cite{ADH91}, \cite{BLM}, \cite{Yeh06} 
by keeping $\del>0$ fixed while passing to the limit as $\ve\to0$ in the
mesoscopic problem \eqref{debut2-f}-\eqref{init1}.
Namely, the global double porosity $\del$-model reads:
\begin{equation}
\label{H-1}
\left\{
\begin{array}[c]{ll}
\ds
\Phi^{\del} \frac{\pt S_f^{\del}}{\pt t} -
{\rm div}\, \bigg( \KSD \lm_{w,f} (S_f^{\del}) \gr P_{w,f}^{\del} \bigg) =
{\EuScript Q}_w^{\del} \quad {\rm in}\,\, \Omega_{T}, \\[6mm]
\ds
- \Phi^{\del} \frac{\pt S_f^{\del}}{\pt t} -
{\rm div}\, \bigg(\KSD \lm_{n,f} (S_f^{\del}) \gr P_{n,f}^{\del} \bigg) =
{\EuScript Q}_n^{\del} \quad {\rm in}\,\, \Omega_{T}, \\[6mm]
P_{c,f}(S_f^{\del}) = P_{n,f}^{\del} - P_{w,f}^{\del} \quad {\rm in}\,\, \Omega_{T}.\\[2mm]
\end{array}
\right.
\end{equation}

The boundary conditions for system \eqref{H-1} are given by:
\begin{equation}
\label{H-6}
\left\{
\begin{array}[c]{ll}
P_{w,f}^{\del} = P_{w,\Gamma} \quad {\rm and} \quad P_{n,f}^{\del} = P_{n,\Gamma} \quad {\rm on} \,\,
\Gamma_{inj} \times (0, T), \\[3mm]
\mathbf{q}^{\,\,\del}_{w,f} \cdot \bs{\nu} =
\mathbf{q}^{\,\,\del}_{n,f} \cdot \bs{\nu} = 0 \quad {\rm on} \,\,
\Gamma_{imp} \times (0, T),\\
\end{array}
\right.
\end{equation}
where
\begin{equation}
\label{H-7}
\mathbf{q}^{\,\,\del}_{w,f} = - \KSD \lm_{w,f}(S_f^{\del}) \gr P_{w,f}^{\del} \quad {\rm and} \quad
\mathbf{q}^{\,\,\del}_{n,f} = - \KSD \lm_{n,f}(S_f^{\del}) \gr P_{n,f}^{\del},
\end{equation}
and the initial condition reads:
\begin{equation}
\label{H-8}
S_f^{\del}(x, 0) = S^{0}_f(x) \,\, {\rm in} \,\, \Omega.
\end{equation}

The effective porosity $\Phi^{\del}$ is given as:
\begin{equation}
\label{H-0}
\Phi^{\del}\eqdef \Phi_f\, \frac{|Y_f^{\del}|}{|Y_m^{\del}|} = \del\, d\, \Phi_f + O(\del^2),
\end{equation}
where $|Y_m^{\del}|$ and $|Y_f^{\del}|$ denote the measure of the set $Y_m^{\del}$ and $Y_f^{\del}$, respectively.
 $\KSD = (\KSD_{ij})$ is the effective permeability tensor given
for $i, j = 1,\ldots,d$ by: 
\begin{equation}
\label{H-2}
\KSD_{ij}  \eqdef \frac{k_f}{|Y_m^{\del}|}\,
\int\limits_{Y_f^{\del}}\, \left[\gr_y \xi_i^{\del} + \mathbf{e}_i \right]\,
\left[\gr_y \xi_j^{\del} + \mathbf{e}_j \right]\, dy,
\end{equation}
with $\mathbf{e}_j$ being the $j$--th coordinate vector.
The function $\xi_j^{\del},$ $j=1,\ldots,d,$ is a solution of the cell problem:
\begin{equation}
\label{H-3}
\left\{
\begin{array}[c]{ll}
- \Delta_y \xi_j^{\del} = 0 \quad {\rm in} \,\, Y_f^{\del}, \\[2mm]
(\gr_y \xi_j^{\del} + \mathbf{e}_j)\cdot \bs{\nu}^\del = 0 \quad {\rm on} \,\, \Gamma^{\del},\\[2mm]
y \longmapsto \xi_j^{\del}(y) \quad Y{\rm -periodic}.
\\
\end{array}
\right.
\end{equation}

The matrix-fracture source terms ${\EuScript Q}_w^{\del}$ and ${\EuScript Q}_n^{\del}$ are given by:
\begin{equation}
\label{H-4}
{\EuScript Q}_w^{\del}(x,t) \eqdef - \frac{\Phi_m}{|Y_m^{\del}|} \int\limits_{Y_m^{\del}}
\frac{\pt S_m^{\del}}{\pt t}(x, y, t)\, dy
= - {\EuScript Q}_n^{\del}(x,t),
\end{equation}
where the function $S_m^{\del}$ is the matrix block saturation defined below.

To each point $x \in \Omega$ there is an associated matrix block congruent to $Y_m^{\del}$.
For any $x \in \Omega$ the flow equations in a matrix block $Y_m^{\del} \times (0, T)$ are:
\begin{equation}
\label{H-5}
\left\{
\begin{array}[c]{ll}
\ds
\Phi_m \frac{\pt S_m^{\del}}{\pt t} -
\del^2 k_m\, {\rm div}_y\, \bigg( \lm_{w,m} (S_m^{\del}) \gr_y P_{w,m}^{\del}\bigg) = 0,
\\[5mm]
\ds
- \Phi_m \frac{\pt S_m^{\del}}{\pt t} -
\del^2 k_m\, {\rm div}_y\, \bigg( \lm_{n,m} (S_m^{\del}) \gr_y P_{n,m}^{\del} \bigg) = 0,
\\[5mm]
P_{c,m}(S_m^{\del}) = P_{n,m}^{\del} - P_{w,m}^{\del}.
\\[2mm]
\end{array}
\right.
\end{equation}
On the interface $\Gamma^\del$ in the cell $Y$ we have the continuity conditions for any $x \in \Omega$:
\begin{equation}
\label{H-9}
P_{w,m}^{\del}(x, y, t) =  P_{w,f}^{\del}(x, t) \quad {\rm and} \quad
P_{n,m}^{\del}(x, y, t) =  P_{n,f}^{\del}(x, t) \quad {\rm on} \,\, \Gamma^\del \times (0, T).
\end{equation}
Finally, the initial condition is
\begin{equation}
\label{H-9-a}
S_m^{\del}(x, y, 0) = S^{0}_m(x) \,\, {\rm in} \,\,  \Omega \times Y_m^{\del}.
\end{equation}

The existence of weak solutions of the global $\del$-problem {\rm \eqref{H-1}-\eqref{H-9-a}}
is a consequence of the homogenization result in \cite{BLM}, \cite{Yeh06}
and it has also been studied in \cite{Yeh02}.

It can be seen, as in \cite{Cio-Pau}, that there exist positive constants $\hat{k}_m^1$, $\hat{k}_m^2$
such that the effective permeability tensor $\KSD$ satisfies for any $\bs{\xi} \in \R^d:$
\begin{equation}
\label{H-2-unif-5-0}
\hat{k}_m^1\, |\bs{\xi}|^2 \leq \frac{1}{\del}\, \KSD \bs{\xi}\cdot\bs{\xi} \leq \hat{k}_m^2\, |\bs{\xi}|^2.
\end{equation}
Following \cite{Cio-Pau}, Chapter 2, the asymptotic behavior of the homogenized permeability tensor $\KSD$
with respect to $\del$ is given by
\begin{equation}
\label{H-2-unif-5}
\frac{\KSD_{ij}}{|Y^\del_f|} = \K^{\star}_{ij} + \bar{\K}^{\del}_{ij},
\end{equation}
where $\bar{\K}^{\del}_{ij} \to 0$ and $|Y^\del_f| = d\,\del +O(\del^2)$.
Moreover (see \cite{Cio-Pau}), the tensor $\K^{\star}=(\K^{\star}_{ij})$ can be calculated as
\begin{equation}
\label{H-2-unif-5-1}
\K^{\star} = k^*\, \mathbb{I} \quad {\rm with}\,\, k^* = \frac{d-1}{d}\, k_f,\quad d=2,3.
\end{equation}

The problem \eqref{H-5}-\eqref{H-9-a} can be simplified
due to the constant in $y$ boundary conditions by eliminating the matrix phase pressures as follows.
Let us first introduce for $\ell = f, m$ the functions
\begin{equation}
\label{upsi-1}
\beta_\ell(s) \eqdef\int\limits_0^s \alpha_\ell(\xi)\, d\xi,
\quad \textrm{ where } \, \,
\alpha_\ell(s) \eqdef \frac{\lm_{w,\ell}(s)\, \lm_{n,\ell}(s)} {\lm_\ell(s)} | P^\prime_{c,\ell}(s) |.
\end{equation}

\begin{lemma}
\label{lemma-der-imb-eqn}
Let $S_m^{\del}(x,y,t)$ be the solution of the cell problem \eqref{H-5}-\eqref{H-9-a}.
It holds:
\begin{equation}
\label{imb-eqn}
\left\{
\begin{array}[c]{ll}
\ds
\Phi_m \frac{\pt S_m^{\del}}{\pt t}\, - \del^2\, k_m\, \Delta_y \beta_m(S_m^{\del}) = 0
\quad {\rm in}\,\, \Omega_T \times Y_m^{\del}, \\[4mm]
S_m^{\del}(x, y, t) = \mathcal{P}(S_f^{\del}(x,t)) \quad {\rm on}\,\, \Omega_T \times \Gamma^{\del}, \\[4mm]
S_m^{\del}(x, y, 0) = S_m^0(x) \quad {\rm in}\,\, \Omega \times Y_m^{\del}.\\[2mm]
\end{array}
\right.
\end{equation}
\end{lemma}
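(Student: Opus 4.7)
The plan is to reduce the coupled cell system \eqref{H-5}--\eqref{H-9-a} to a single scalar degenerate diffusion equation for $S_m^{\del}$ by showing that the \emph{total} (two-phase) flux vanishes identically inside $Y_m^{\del}$. The crucial feature of the cell problem is that, by \eqref{H-9}, both phase pressures $P_{w,m}^{\del}$ and $P_{n,m}^{\del}$ take constant (in $y$) Dirichlet values on $\Gamma^{\del}$, namely $P_{w,f}^{\del}(x,t)$ and $P_{n,f}^{\del}(x,t)$. This rigidity of the boundary data is what forces the two-phase coupling to collapse.

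First I would add the two equations of \eqref{H-5} to obtain
\begin{equation*}
-\del^2 k_m \,{\rm div}_y\!\left(\lm_{w,m}(S_m^{\del})\gr_y P_{w,m}^{\del} + \lm_{n,m}(S_m^{\del})\gr_y P_{n,m}^{\del}\right) = 0 \quad \text{in } Y_m^{\del}.
\end{equation*}
Then I would introduce the cell global pressure
\begin{equation*}
P_m^{g,\del}(x,y,t) \eqdef P_{w,m}^{\del}(x,y,t) + \int_0^{S_m^{\del}(x,y,t)} \frac{\lm_{n,m}(\xi)}{\lm_m(\xi)} P_{c,m}'(\xi)\,d\xi,
\end{equation*}
which is well defined by \Ab{3}--\Ab{4} and satisfies $\lm_m(S_m^{\del})\gr_y P_m^{g,\del} = \lm_{w,m}\gr_y P_{w,m}^{\del} + \lm_{n,m}\gr_y P_{n,m}^{\del}$ thanks to $\gr_y P_{n,m}^{\del} - \gr_y P_{w,m}^{\del} = P_{c,m}'(S_m^{\del})\gr_y S_m^{\del}$. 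Thus $P_m^{g,\del}$ solves the linear elliptic equation $-{\rm div}_y(\lm_m(S_m^{\del})\gr_y P_m^{g,\del}) = 0$ in $Y_m^{\del}$, and on $\Gamma^{\del}$ it equals the $y$-independent constant obtained by evaluating its definition at the boundary values. Since $\lm_m(S_m^{\del})\geqslant L_0>0$ by \Ab{4}, the weak maximum principle (or the uniqueness of weak solutions for a linear Dirichlet problem with strictly positive, bounded coefficient) forces $P_m^{g,\del}$ to be spatially constant in $y$ throughout $Y_m^{\del}$, so that the total flux $\lm_{w,m}\gr_y P_{w,m}^{\del} + \lm_{n,m}\gr_y P_{n,m}^{\del}$ vanishes identically.

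With this in hand, eliminating $\gr_y P_{n,m}^{\del}$ through the capillary pressure relation yields $\lm \gr_y P_{w,m}^{\del} = -\lm_{n,m} P_{c,m}'(S_m^{\del}) \gr_y S_m^{\del}$, so that
\begin{equation*}
\lm_{w,m}(S_m^{\del})\gr_y P_{w,m}^{\del} = -\frac{\lm_{w,m}\lm_{n,m}}{\lm_m} P_{c,m}'(S_m^{\del})\gr_y S_m^{\del} = \alpha_m(S_m^{\del})\gr_y S_m^{\del} = \gr_y \beta_m(S_m^{\del}),
\end{equation*}
where I used $P_{c,m}'<0$ so that $|P_{c,m}'|=-P_{c,m}'$. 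Substituting into the wetting-phase equation of \eqref{H-5} gives the claimed PDE $\Phi_m \pt_t S_m^{\del} - \del^2 k_m \Delta_y \beta_m(S_m^{\del}) = 0$. The boundary condition $S_m^{\del} = \mathcal{P}(S_f^{\del})$ on $\Gamma^{\del}$ is then immediate: from \eqref{H-9} the capillary relations in \eqref{H-5} and \eqref{H-1} give $P_{c,m}(S_m^{\del}) = P_{n,f}^{\del} - P_{w,f}^{\del} = P_{c,f}(S_f^{\del})$, so \eqref{mathcalP-def} yields the stated expression; the initial condition in \eqref{imb-eqn} is copied from \eqref{H-9-a}.

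The main obstacle is the justification of the uniqueness step that gives $P_m^{g,\del} \equiv \mathrm{const}$, since $\lm_m(S_m^{\del})$ is merely a bounded measurable function on $Y_m^{\del}$ (depending on the still-unknown saturation). This is handled by \Ab{4}, which guarantees uniform ellipticity $\lm_m\geqslant L_0$ of the linear problem for $P_m^{g,\del}$ at (almost every) fixed $(x,t)$. The remaining manipulations are algebraic identities using the chain rule and definition \eqref{upsi-1} of $\beta_m$; no delicate estimate is needed beyond the regularity already guaranteed by the weak formulation of \eqref{H-5}--\eqref{H-9-a}.
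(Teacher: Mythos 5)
Your proof is correct and follows essentially the same path as the paper: introduce the global pressure in the matrix block, observe that its Dirichlet data on $\Gamma^{\del}$ is $y$-independent, deduce $\gr_y \mathsf{P}_m^{\del} = 0$ from the uniform ellipticity in \Ab{4}, and then eliminate the phase-pressure gradient in favor of $\gr_y\beta_m(S_m^{\del})$. The only cosmetic difference is that you invoke uniqueness/the weak maximum principle for the linear Dirichlet problem, whereas the paper carries out the equivalent energy argument explicitly by multiplying $-\del^2 k_m\,{\rm div}_y(\lambda_m(S_m^{\del})\gr_y \mathsf{P}_m^{\del})=0$ by $\mathsf{P}_m^{\del}-P_{m,\Gamma}^{\del}$ and integrating over $\Omega_T\times Y_m^{\del}$.
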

Equation $\eqref{imb-eqn}_1$ is known as the {\em imbibition equation}.

\begin{proof}
Let us first introduce the global pressure $\mathsf{P}_m^{\del}$ in the matrix block (see \cite{AKM,CJ}) by
\begin{equation}
\label{gp1}
P_{w,m}^{\del} = \mathsf{P}_m^{\del} - 
\int\limits_{S_m^{\del}}^1 \frac{\lm_{n,m}(\xi)} {\lm_{m}(\xi)} \, P_{c,m}^\prime(\xi)\, d\xi,
\quad 
P_{n,m}^{\del} = \mathsf{P}_m^{\del} + 
\int\limits_{S_m^{\del}}^1 \frac{\lm_{w,m}(\xi)} {\lm_{m}(\xi)} \, P_{c,m}^\prime(\xi)\, d\xi,
\end{equation}
where the total mobility function $\lm_m$ is defined  in \Ab{4}.
From the boundary conditions \eqref{H-9} at the interface $\Gamma^\del$
we immediately get $\eqref{imb-eqn}_2$.
Since the function $S_m^{\del}$ does not depend on $y$ on $\Omega_T \times \Gamma^\del$,
it follows that the global pressure $\mathsf{P}_m^{\del}$ does not depend on $y$ on $\Omega_T \times \Gamma^\del$.
Therefore, we can write
\begin{equation}
\label{H-9-then-2}
\mathsf{P}_m^{\del}(x, y, t) = P_{m,\Gamma}^{\del}(x, t) \quad {\rm on} \,\, \Omega_T \times \Gamma^\del.
\end{equation}
By summing the two equations in  problem \eqref{H-5} and by applying the
definition of $\mathsf{P}_m^{\del}$ we get (\cite{AKM,CJ})
\begin{equation}
\label{H-18}
 -\del^2\, k_m\, {\rm div}\, \bigg( \lm_{m} (S_m^{\del})\gr \mathsf{P}_{m}^{\del} \bigg) =
0 \quad {\rm in}\,\, \Omega_{T} \times Y_m^\del,
\end{equation}
and by multiplying the equation \eqref{H-18} by $\mathsf{P}_m^{\del} - P_{m,\Gamma}^{\del}$
and integrating over $\Omega_T\times Y_m^{\del}$,
using \eqref{H-9-then-2} and \Ab{4} we obtain:
$$
0 = \del^2\, k_m\, \int\limits_{\Omega_T\times Y_m^{\del}}
\lm_m (S_m^{\del})|\gr_y \mathsf{P}_m^{\del}|^2 \, dx\,dy\,dt \geqslant
\del^2\, k_m\,\, L_0\, \int\limits_{\Omega_T\times Y_m^{\del}} |\gr_y \mathsf{P}_m^{\del}|^2 \, dx\,dy\,dt,
$$
which gives
\begin{equation}
\label{H-9-then-5}
\gr_y \mathsf{P}_m^{\del} = 0 \quad {\rm a.e.\,\, in} \,\, \Omega_T\times Y_m^{\del}.
\end{equation}

This result allows us to reduce the two equations in the problem \eqref{H-5} to only one, as announced.
Namely, by taking into account \eqref{H-9-then-5} and the identity
$$
\lm_{w,m}(S_m^{\del}) \gr_y P_{w,m}^{\del} =
\lm_{w,m}(S_m^{\del}) \gr_y \mathsf{P}_m^{\del} - \gr_y \beta_m(S_m^{\del}),
$$
from $\eqref{H-5}_1$ we establish $\eqref{imb-eqn}_1$.
This completes the proof of Lemma~\ref{lemma-der-imb-eqn}.

\end{proof}

Let us point out that the matrix-fracture source terms ${\EuScript Q}_w^{\del}, {\EuScript Q}_n^{\del}$
of the system \eqref{H-1}, given in an implicit form by \eqref{H-4},
involve the function $S_m^{\del}$ which is a solution
of the local boundary value problem
\eqref{imb-eqn}, which is coupled with the global problem \eqref{H-1}-\eqref{H-8}
through its boundary condition.
This feature of the system \eqref{H-1}-\eqref{H-4}, \eqref{imb-eqn}
is captured by the concept introduced in \cite{AMPP}:
the homogenized system of equations is said to be {\bf fully homogenized}
if it does not involve the unknown functions which are defined
as the solutions of the coupled local problems.
The global $\del$-problem \eqref{H-1}-\eqref{H-4}, \eqref{imb-eqn}
is not fully homogenized in the said sense.
The purpose of the 	succeeding sections is to express the source terms
${\EuScript Q}_w^{\del}, {\EuScript Q}_n^{\del}$ in an explicit form
by decoupling the global system \eqref{H-1}-\eqref{H-4}
from the local problem \eqref{imb-eqn}.
This will be done by passing to the limit as $\del\to0$ in the system \eqref{H-1}-\eqref{H-4}, \eqref{imb-eqn}
and thereby establishing the fully homogenized model.
Following the idea of \cite{Arb-simpl} we will first linearize the imbibition equation \eqref{imb-eqn}
and perform the asymptotic analysis of the linearized imbibition equation.

\section{Linearized imbibition equation}
\label{sect:imb-subsect:linear}
\setcounter{equation}{0}

Our next step is to simplify the matrix cell problem \eqref{imb-eqn}
by introducing a linearized version of that problem.
As suggested in \cite{Arb-simpl}, we consider a function $\psi_m(x)$
such that
\begin{equation}
\label{psi_m-def}
\psi_m \thickapprox \alpha_m(S_m^\del).
\end{equation}
Moreover, we assume that there are constants $\psi_{m}^{min}, \psi_{m}^{max}$
such that for any $x \in \Omega$ it holds
\begin{equation}
\label{psi_m-bound}
0 < \psi_{m}^{min} \leq \psi_m(x) \leq \psi_{m}^{max}.
\end{equation}

Thus we replace the imbibition equation \eqref{imb-eqn}
by its linearized version
\begin{equation}
\label{S_m_L_delta}
\left\{
\begin{array}[c]{ll}
\ds
\Phi_m \frac{\pt S_{m}^{\del}}{\pt t}\, - \del^2 k_m\, \psi_m(x) \Delta_y S_{m}^{\del} = 0
\quad {\rm in}\,\, \Omega_T \times Y_m^{\del}, \\[4mm]
S_{m}^{\del}(x, y, t) = \mathcal{P}(S_f^{\del}(x,t)) \quad {\rm on}\,\, \Omega_T \times \Gamma^{\del}, \\[4mm]
S_{m}^{\del}(x, y, 0) = S_m^0(x) \quad {\rm in}\,\, \Omega \times Y_m^{\del}.\\[2mm]
\end{array}
\right.
\end{equation}

The particular choice of function $\psi_m$ was proposed and validated in \cite{Arb-simpl}.
The numerical simulations were performed for exact and linearized models
and the computational results show that the linearized model is computationally less complex
while essentially without  significant loss in accuracy
compared to the exact model.

An existence result for the model \eqref{H-1}-\eqref{H-4}, \eqref{S_m_L_delta} is proved in \cite{Arb-exist}.

In order to analyze the behavior of $S_m^{\del}$ as $\del\to0$ we replace
parabolic problem \eqref{S_m_L_delta} by an elliptic problem by use of the
Laplace transform $\mathcal{L}$.
Let $S_{m}^{\del}(x,y,t)$ be the solution of the linearized problem \eqref{S_m_L_delta}.
We denote for $\lm >0$:
$$s_{m}^{\del} \eqdef \mathcal{L}(S_{m}^{\del}).$$

By using the basic properties of the Laplace transformation, it follows easily that
the function $s_{m}^{\del}(x,y,\lm)$ satisfies the following problem:
\begin{equation}
\label{S_m_L_lm_delta}
\left\{
\begin{array}[c]{ll}
\ds
\lm\Phi_m \, s_{m}^{\del}(x,y,\lm)
- \del^2 k_m \psi_m(x) \Delta_y s_{m}^{\del}(x,y,\lm)
= \Phi_m S_m^0(x) \quad {\rm in}\,\, \Omega \times Y_m^{\del}, \\[4mm]
s_{m}^{\del}(x,y,\lm) = \mathcal{L}\big(\,\mathcal{P}(S_f^{\del})\,\big)(x,\lm) \quad {\rm on}\,\, \Omega \times \Gamma^{\del}.
\end{array}
\right.
\end{equation}
Introducing the associated auxiliary problem with constant boundary data:
\begin{equation}
\label{U_lm_1_delta}
\left\{
\begin{array}[c]{ll}
\ds
\lm\Phi_m \, \mathsf{u}^{\del}(x,y,\lm)
- \del^2 k_m \psi_m(x) \Delta_y \mathsf{u}^{\del}(x,y,\lm)
= 0 \quad {\rm in}\,\, \Omega \times Y_m^{\del}, \\[4mm]
\mathsf{u}^{\del}(x,y,\lm)  = 1 \quad {\rm on}\,\, \Omega \times \Gamma^{\del},
\end{array}
\right.
\end{equation}
it is easy to see that the solution $s_{m}^{\del}$  of \eqref{S_m_L_lm_delta} is  given by
\begin{equation}
\label{U_lm_delta-sol}
s_{m}^{\del}(x,y,\lm) = \frac{1}{\lm}\, S_m^0(x)
+\mathsf{u}^{\del}(x,y,\lm)\,\, \mathcal{L}\big(\,\mathcal{P}(S_f^{\del}(x,t)) - S_m^0(x)\,\big).
\end{equation}

From \eqref{H-4}, using \eqref{U_lm_delta-sol} we obtain
\begin{equation}
\label{Qw-2}
{\EuScript Q}_{w}^{\del}(x,t) = - \frac{\Phi_m}{|Y_m^{\del}|}\,
\mathcal{L}^{-1}\bigg(\, \lm\, \mathcal{L}\big(\,\mathcal{P}(S_f^{\del}(x,t)) - S_m^0(x)\,\big)\, \int_{Y_m^{\del}} \mathsf{u}^{\del}(x,y,\lm) dy \bigg).
\end{equation}

In order to estimate the asymptotic behavior of ${\EuScript Q}_{w}^{\del}$ as $\del$ tends to $0$,
we need to estimate asymptotically in $\del$ the integral term
$\ds\int_{Y_m^{\del}} \mathsf{u}^{\del}(x,y,\lm) dy$ in \eqref{Qw-2}.
Slightly modifying the proof of Lemma~7.2 from \cite{PR}, we have:

\begin{lemma}
\label{l-asympt}
For any $x\in\Omega$, let $\mathsf{u}^{\del}(x,y,\lm)$
be the solution of the problem \eqref{U_lm_1_delta} with parameter $x$.
Then it holds as $\del\to0$, uniformly in $x$,
\begin{equation}
\label{U_lm_1_delta_asympt}
\ds
\int_{Y_m^{\del}} \mathsf{u}^{\del}(x,y,\lm)dy =
\frac{6 \sqrt{k_m \psi_m(x)}}  {\sqrt{\Phi_m} \sqrt{\lm}}\, \del\, (1+o(1)).
\end{equation}
\end{lemma}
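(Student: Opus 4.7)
Problem \eqref{U_lm_1_delta} is a Helmholtz equation $\Delta_y\mathsf{u}^\del = \kappa^2\mathsf{u}^\del$ in the cube $Y_m^\del=(\del/2,1-\del/2)^d$ with constant boundary data $\mathsf{u}^\del=1$ on $\Gamma^\del$, where
\[
\kappa=\kappa(\del,x):=\frac{1}{\del}\sqrt{\frac{\lm\,\Phi_m}{k_m\,\psi_m(x)}}.
\]
The characteristic length $1/\kappa$ is of order $\del$, much smaller than the side $1-\del$ of $Y_m^\del$, so $\mathsf{u}^\del$ decays exponentially from the boundary into the interior on a layer of width $O(\del)$. Heuristically, $\int_{Y_m^\del}\mathsf{u}^\del\,dy$ is $2d$ times the one-dimensional boundary-layer integral $1/\kappa\sim\del$, which in $d=3$ produces the prefactor $6$.

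To make this rigorous, I would first rescale $y\mapsto z=(y-\tfrac{\del}{2}\mathbf{1})/(1-\del)$ to obtain $\Delta_z\tilde{\mathsf{u}}^\del=\mu^2\tilde{\mathsf{u}}^\del$ in $(0,1)^d$ with $\tilde{\mathsf{u}}^\del=1$ on the boundary, $\mu:=(1-\del)\kappa$. By \eqref{psi_m-bound} we have $\mu\to\infty$ uniformly in $x$ as $\del\to0$. Let $\phi(t):=\cosh(\mu(t-\tfrac12))/\cosh(\mu/2)$ be the one-dimensional solution, for which $\int_0^1\phi\,dt=(2/\mu)\tanh(\mu/2)$. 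The key observation is that
\[
\tilde{\mathsf{u}}^{app}(z):=\sum_{i=1}^d\phi(z_i)
\]
satisfies $\Delta_z\tilde{\mathsf{u}}^{app}=\mu^2\tilde{\mathsf{u}}^{app}$ exactly by linearity, and on each face $\{z_i\in\{0,1\}\}$ equals $1+\sum_{j\neq i}\phi(z_j)$. Thus the boundary error $\tilde{\mathsf{u}}^{app}-1$ is exponentially small in the interior of each face and is concentrated in $O(\mu^{-1})$-neighborhoods of the lower-dimensional faces of the cube, giving $\|\tilde{\mathsf{u}}^{app}-1\|_{L^1(\partial(0,1)^d)}=O(\mu^{-1})$.

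The remainder $w:=\tilde{\mathsf{u}}^\del-\tilde{\mathsf{u}}^{app}$ then solves the same Helmholtz equation with boundary data of $L^1$-mass $O(\mu^{-1})$. Combining this with the fact that the Poisson kernel of $(-\Delta_z+\mu^2)$ on the unit cube decays exponentially at rate $\mu$ into the interior, one obtains $\|w\|_{L^1((0,1)^d)}=O(\mu^{-2})$, of lower order than the leading term. Hence
\[
\int_{(0,1)^d}\tilde{\mathsf{u}}^\del\,dz=d\int_0^1\phi\,dt+O(\mu^{-2})=\frac{2d}{\mu}\bigl(1+o(1)\bigr),
\]
and returning to $y$-variables via the Jacobian $(1-\del)^d$ and substituting for $\mu$ yields
\[
\int_{Y_m^\del}\mathsf{u}^\del\,dy=\frac{2d\,(1-\del)^{d-1}\,\del\,\sqrt{k_m\psi_m(x)}}{\sqrt{\Phi_m}\sqrt{\lm}}\bigl(1+o(1)\bigr),
\]
which agrees with \eqref{U_lm_1_delta_asympt} in the three-dimensional case $2d=6$.

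The principal obstacle is establishing the $L^1$-bound $\|w\|_{L^1}=O(\mu^{-2})$ with the correct order: a naive pointwise maximum principle gives only $|w|\leq d-1$, which loses the crucial exponential decay from the boundary. To recover the sharp bound one must either construct a barrier that dominates $|w|$ by $C\,\tilde{\mathsf{u}}^{app}$ after localization near the edges and vertices where the boundary error lives, or invoke the explicit Poisson representation on the cube together with its exponential decay estimates. This is essentially the route taken in Lemma~7.2 of \cite{PR}; the only genuinely new ingredient here is the $x$-dependence through the positive bounded factor $\psi_m(x)$, and the required uniformity of $o(1)$ in $x$ is inherited from \eqref{psi_m-bound}.
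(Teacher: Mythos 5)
The paper itself supplies no proof of Lemma~\ref{l-asympt}; it only cites Lemma~7.2 of \cite{PR} with the remark that it needs slight modification (to accommodate the $x$--dependent coefficient $\psi_m$). Your plan --- rescale $Y_m^\del$ to $(0,1)^d$, identify the large Helmholtz parameter $\mu=(1-\del)\kappa\to\infty$, introduce the one--dimensional profile $\phi(t)=\cosh(\mu(t-\tfrac12))/\cosh(\mu/2)$ with $\int_0^1\phi=\tfrac{2}{\mu}\tanh(\mu/2)$, approximate the solution by the exact Helmholtz solution $\sum_i\phi(z_i)$, and argue the correction is of lower order --- is sound and is in all likelihood the argument of \cite{PR}. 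You also correctly observe that the geometric prefactor is $2d$ (one boundary layer per face of the cube), which matches the paper's stated $6$ only when $d=3$; the factor $6$ is indeed inherited from the three-dimensional setting of \cite{PR} even though the paper nominally allows $d=2$.

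The one step you leave as an acknowledged gap is the bound $\|w\|_{L^1((0,1)^d)}=O(\mu^{-2})$ for the remainder $w=\tilde{\mathsf{u}}^\del-\sum_i\phi(z_i)$. The Poisson-kernel route you sketch works but requires quantitative Green's function estimates on the cube. A shorter closed argument avoids estimating $w$ altogether: you already have the upper bound $\tilde{\mathsf{u}}^\del\leq\sum_i\phi(z_i)$ from the maximum principle (since the boundary error of $\sum_i\phi(z_i)$ is nonnegative), and you can pair it with the lower bound $\max_i\phi(z_i)\leq\tilde{\mathsf{u}}^\del$, valid because the pointwise maximum of solutions of $(-\Delta_z+\mu^2)u=0$ is a weak subsolution of the same operator (whose zeroth-order coefficient is nonnegative) and equals $1$ on $\partial(0,1)^d$. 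The sandwich gap obeys
\begin{equation*}
\int_{(0,1)^d}\Bigl(\sum_i\phi(z_i)-\max_i\phi(z_i)\Bigr)dz
\;\leq\;\sum_{i<j}\int_{(0,1)^d}\min\bigl(\phi(z_i),\phi(z_j)\bigr)\,dz
\;\leq\;\binom{d}{2}\Bigl(\int_0^1\sqrt{\phi(t)}\,dt\Bigr)^2=O(\mu^{-2}),
\end{equation*}
using $\min(a,b)\leq\sqrt{ab}$, the separation of variables, and $\int_0^1\sqrt{\phi}=O(\mu^{-1})$. This yields $\int_{(0,1)^d}\tilde{\mathsf{u}}^\del\,dz=\frac{2d}{\mu}\bigl(1+O(\mu^{-1})\bigr)$ directly, and the uniformity in $x$ required by Lemma~\ref{l-asympt} follows from \eqref{psi_m-bound} exactly as you note.
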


Finally, from \eqref{Qw-2} and \eqref{U_lm_1_delta_asympt},
by applying the basic properties of the Laplace transformation,
we obtain the following result.

\begin{corollary}
\label{cor-asympt}
The simplified matrix-fracture source terms ${\EuScript Q}_{w}^{\del}, {\EuScript Q}_{n}^{\del}$
satisfy
\begin{equation}
\label{source-conv}
{\EuScript Q}_{w}^{\del}(x,t) =
- \frac{\pt}{\pt t}\, \big[\,\big(\mathcal{P}(S_f^{\del}) - \mathcal{P}(S_f^{0}) \big)
\ast \omega^\del \big] (x,t)
= - {\EuScript Q}_{n}^{\del}(x,t),
\end{equation}
where we denote
\begin{equation}
\label{w_Ddelta-def}
\omega^\del(x,t) \eqdef D^{\del}(x)\, \frac{1}{\sqrt{t}}, \quad
D^{\del}(x) \eqdef \del\, \left( \frac{C_m(x)}{|Y_m^{\del}|} + o(1) \right), \quad
C_m(x) \eqdef \frac{6 \sqrt{\Phi_m k_m \psi_m(x)}}{\sqrt{\pi}},
\end{equation}
and $\ast$ denotes convolution with respect to time:
$\ds (f \ast g)(t) \eqdef \int_0^t\, f(\tau) g(t-\tau) d\tau$.
\end{corollary}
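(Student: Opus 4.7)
The plan is to derive the stated formula directly from equation \eqref{Qw-2} by substituting the asymptotic expansion of Lemma~\ref{l-asympt} and then reinterpreting the resulting expression in the time domain via inverse Laplace transform. The whole argument is essentially a bookkeeping exercise once two preliminary identifications are made, and that is where I would begin.

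First, I would invoke the consistency condition from assumption \Ab{3}: $P_{c,m}(S_m^0) = P_{c,f}(S_f^0)$ implies $S_m^0(x) = (P_{c,m}^{-1}\circ P_{c,f})(S_f^0(x)) = \mathcal{P}(S_f^0(x))$, so the difference $\mathcal{P}(S_f^\del)-S_m^0$ appearing in \eqref{Qw-2} is exactly $\mathcal{P}(S_f^\del)-\mathcal{P}(S_f^0)$. Next, I would plug the asymptotic \eqref{U_lm_1_delta_asympt} into \eqref{Qw-2}, yielding
\begin{equation*}
{\EuScript Q}_w^\del(x,t) = -\frac{\Phi_m}{|Y_m^\del|}\cdot\frac{6\sqrt{k_m\psi_m(x)}}{\sqrt{\Phi_m}}\,\del\,(1+o(1))\;\mathcal{L}^{-1}\!\left(\frac{\lm}{\sqrt{\lm}}\,\mathcal{L}\bigl(\mathcal{P}(S_f^\del)-\mathcal{P}(S_f^0)\bigr)\right).
\end{equation*}
The prefactor simplifies to $\frac{\del\,(1+o(1))}{|Y_m^\del|}\cdot 6\sqrt{\Phi_m k_m \psi_m(x)}$, which, after dividing and multiplying by $\sqrt{\pi}$, is precisely $D^\del(x)\sqrt{\pi}$ as defined in \eqref{w_Ddelta-def}.

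To handle the inverse Laplace transform, I would use the two standard identities $\mathcal{L}(1/\sqrt{t})(\lm)=\sqrt{\pi/\lm}$ and the convolution theorem $\mathcal{L}(f\ast g)=\mathcal{L}(f)\mathcal{L}(g)$ to rewrite
\begin{equation*}
\frac{1}{\sqrt{\lm}}\,\mathcal{L}\bigl(\mathcal{P}(S_f^\del)-\mathcal{P}(S_f^0)\bigr) = \frac{1}{\sqrt{\pi}}\,\mathcal{L}\!\left(\bigl(\mathcal{P}(S_f^\del)-\mathcal{P}(S_f^0)\bigr)\ast\frac{1}{\sqrt{t}}\right).
\end{equation*}
Multiplication by $\lm$ then corresponds to differentiation in time: since the convolution vanishes at $t=0^+$, $\mathcal{L}^{-1}(\lm\mathcal{L}(h))=\pt_t h$ with no boundary term. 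Combining these pieces, $\sqrt{\pi}$ cancels against the $\sqrt{\pi}$ hidden in $C_m$, and I arrive at
\begin{equation*}
{\EuScript Q}_w^\del(x,t) = -\,\frac{\pt}{\pt t}\!\left[\bigl(\mathcal{P}(S_f^\del)-\mathcal{P}(S_f^0)\bigr)\ast \omega^\del\right](x,t),
\end{equation*}
with $\omega^\del(x,t)=D^\del(x)/\sqrt{t}$ exactly as claimed; the equality ${\EuScript Q}_n^\del=-{\EuScript Q}_w^\del$ is already contained in \eqref{H-4}.

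The computation is mechanical; the only real subtlety is the careful handling of the $(1+o(1))$ remainder from Lemma~\ref{l-asympt}, which must be absorbed into the $o(1)$ term inside the definition of $D^\del(x)$ in \eqref{w_Ddelta-def}. In particular, because $|Y_m^\del|=1+O(\del)$, writing $\del(1+o(1))/|Y_m^\del| = \del/|Y_m^\del| + o(\del)$ shows that $D^\del(x)=\del(C_m(x)/|Y_m^\del|+o(1))$ is the correct form, so there is no hidden loss of accuracy. I expect this accounting of error terms, together with verifying that $\mathcal{P}(S_f^\del)-\mathcal{P}(S_f^0)$ has enough regularity for the Laplace transform manipulations and the convolution identity to be applied rigorously, to be the only mildly delicate step.
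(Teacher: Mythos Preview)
Your proposal is correct and follows exactly the route the paper indicates: the paper itself offers no detailed argument beyond saying that the corollary follows ``from \eqref{Qw-2} and \eqref{U_lm_1_delta_asympt}, by applying the basic properties of the Laplace transformation,'' and your derivation simply spells out those steps---the identification $S_m^0=\mathcal{P}(S_f^0)$ via \Ab{3}, the substitution of the asymptotic, the use of $\mathcal{L}(1/\sqrt{t})=\sqrt{\pi/\lm}$ together with the convolution theorem, and the absorption of the $(1+o(1))$ factor into $D^\del$. There is nothing to add.
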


Note that for all $x \in \Omega$ and sufficiently small $\del$ it holds
\begin{equation}
\label{w_Ddelta-max}
D^{\del}(x) \leq 2\,\del\, C_m^{max},\quad  \ds C_m^{max} = \frac{6 \sqrt{\Phi_m k_m \psi_m^{max}}}{\sqrt{\pi}}.
\end{equation}

\section{Passage to the limit as $\del \to 0$}
\label{sect:pass_delta}
\setcounter{equation}{0}

In order to derive the fully homogenized model we need to pass to the limit
as $\del\to0$ in the problem \eqref{H-1} with corresponding boundary and initial conditions.
We start by transforming the system \eqref{H-1}
by employing new variables: the global pressure $\mathsf{P}_f^{\del}$
and a "complementary pressure" $\theta_f^{\del}$.
First the global pressure $\mathsf{P}_f^{\del}$ in the fractures is inducted
analogously to \eqref{gp1} by
\begin{equation}
\label{gp1-f}
P_{w,f}^{\del} = \mathsf{P}_f^{\del} 
- \int\limits_{S_f^{\del}}^1 \frac{\lm_{n,f}(\xi)} {\lm_f(\xi)} \, P_{c,f}^\prime(\xi)\, d\xi,
\quad 
P_{n,f}^{\del} = \mathsf{P}_f^{\del} 
+ \int\limits_{S_f^{\del}}^1 \frac{\lm_{w,f}(\xi)} {\lm_f(\xi)} \, P_{c,f}^\prime(\xi)\, d\xi.
\end{equation}
A "complementary pressure" $\theta_f^{\del}$ is defined (see \cite{Arb-exist}) by
\begin{equation}
\label{comp-pres-def}
\ds
\theta_f^{\del} \eqdef \beta_f (S_f^{\del}),
\end{equation}
where $\beta_f$ is defined in \eqref{upsi-1}.
We denote
$\ds \theta_f^{\star} = \beta_f (1)$
and the inverse function
\begin{equation}
\label{mathcalS-def}
\ds
S_f^{\del} = \mathcal{B}_f(\theta_f^{\del}) \eqdef \beta_f^{-1}(\theta_f^{\del})
\quad {\rm for}\, \ 0 \leq \theta_f^{\del} \leq \theta_f^{\star}.
\end{equation}
Note that $\mathcal{B}_f: [0, \theta_f^*] \to [0,1]$ is a continuous and monotone increasing function.

Finally, the system  \eqref{H-1}
in terms of the global pressure and the complementary pressure reads:
\begin{equation}
\label{H-delta-lin-GP}
\left\{
\begin{array}[c]{ll}
\ds
 {\rm div}\, \bigg( \lm_{f} (S_f^{\del})\mathbb{K}^{\star,\del} \gr \mathsf{P}_f^{\del} \bigg) =
0 \quad {\rm in}\,\, \Omega_{T}, \\[6mm]
\ds
\Phi^{\del} \frac{\pt S_f^{\del}}{\pt t}
- {\rm div}\, \bigg( \mathbb{K}^{\star, \del} \left( \lm_{f} (S_f^{\del}) \gr \theta_{f}^{\del}
+ \lm_{w,f} (S_f^{\del}) \gr \mathsf{P}_f^{\del} \right) \bigg) =
{\EuScript Q}_{w}^{\del} \quad {\rm in}\,\, \Omega_{T}, \\[6mm]
S_f^{\del} = \mathcal{B}_f(\theta_f^{\del}) \quad {\rm in}\,\, \Omega_{T}.\\[2mm]
\end{array}
\right.
\end{equation}
The boundary conditions for system \eqref{H-delta-lin-GP} are given by:
\begin{equation}
\label{H-14}
\left\{
\begin{array}[c]{ll}
\mathsf{P}_f^{\del} = P_{\Gamma} \quad {\rm and} \quad \theta_{f}^{\del} = \theta_{\Gamma} \enskip {\rm on} \,\,
\Gamma_{inj} \times (0, T), \\[3mm]
 \KSD \lm_{f}(S_f^{\del}) \gr \mathsf{P}_f^{\del} \cdot \bs{\nu} = 0  \ {\rm on} \,\,
\Gamma_{imp} \times (0, T), \\[3mm]
 \KSD \left( \lm_{f} (S_f^{\del}) \gr \theta_{f}^{\del}
+ \lm_{w,f} (S_f^{\del}) \gr \mathsf{P}_f^{\del} \right) \cdot \bs{\nu} = 0 \ {\rm on} \,\,
\Gamma_{imp} \times (0, T).
\end{array}
\right.
\end{equation}
The initial condition reads:
\begin{equation}
\label{H-15}
\theta_f^{\del}(x, 0) = \theta^{0}_f(x) \,\, {\rm in} \,\, \Omega.
\end{equation}
The boundary and initial data $P_{\Gamma}$, $\theta_{\Gamma}$ and $\theta^{0}_f$ in \eqref{H-14} and \eqref{H-15}
are given by the corresponding transformations of the functions $P_{w,\Gamma}$, $P_{n,\Gamma}$,
$S_{f,\Gamma} \eqdef P^{-1}_{c,f}(P_{n,\Gamma} - P_{w,\Gamma})$ 
and $S^{0}_f$.

Now we state the rest of the assumptions on the data which will assure the existence for weak solutions
of the problem \eqref{H-delta-lin-GP}-\eqref{H-15}.

\begin{itemize}
\item[\Ab{5}] 
The boundary and initial data satisfy: $P_{\Gamma} \in L^2(0,T;H^1(\Omega))$,
$\theta_{\Gamma} \in L^2(0,T;H^1(\Omega))$, $\pt_t \theta_{\Gamma} \in L^1(\Omega_T)$,
$\theta_f^0 \in L^2(\Omega)$, $0 \leq \theta_f^0, \theta_{\Gamma} \leq \theta_f^* \textrm { a.e. in } \Omega$.
\end{itemize}

A weak solution of this problem is defined as follows.
Let
\begin{equation}
\label{V-def}
V = \{u\in H^1(\Omega),u|_{\Gamma_{inj}}=0\}.
\end{equation}

\begin{definition}
\label{def-weak-delta-lin}
A weak solution to the system \eqref{H-delta-lin-GP}-\eqref{H-15}
is a pair $(\mathsf{P}_f^{\del}, \theta_{f}^{\del})$ such that
\begin{equation*}
\label{weak-sol-spaces}
\left.
\begin{array}[c]{cc}
\mathsf{P}_f^{\del} - P_{\Gamma} \in L^2(0,T;V), \enskip \theta_{f}^{\del} - \theta_{\Gamma} \in L^2(0,T;V), \enskip
0 \leq \theta_f^{\del} \leq \theta_f^* \quad \textrm{a.e. in } \Omega_T, \enskip
S_f^{\del} = \mathcal{B}_f(\theta_f^{\del}),\\ [2mm]
\ds
\pt_t\, \big( \Phi^{\del} S_f^{\del} + \,\big(\mathcal{P}(S_f^{\del}) - \mathcal{P}(S_f^{0}) \big) \ast \omega^\del \big) \in L^2(0,T;V'),\\
\end{array}
\right.
\end{equation*}
{\it for any $\zeta, \varphi \in L^2(0,T;V)$}
\begin{equation}
\label{H-16-weak}
\int\limits_{\Omega_T} \lm_{f} (S_f^{\del})\mathbb{K}^{\star, \del} \gr \mathsf{P}_f^{\del}
\cdot \gr\zeta\, dx\, dt = 0,
\end{equation}
\begin{equation}
\label{H-17-weak}
\begin{split}
\int\limits_{0}^T \langle \pt_t \big(\Phi^{\del} S_f^{\del}
+ \,\big(\mathcal{P}(S_f^{\del}) & - \mathcal{P}(S_f^{0}) \big) \ast \omega^\del  \big),
\varphi \rangle  dt \\
+ \int\limits_{\Omega_T} & \mathbb{K}^{\star, \del} \left( \lm_{f} (S_f^{\del}) \gr \theta_{f}^{\del}
+ \lm_{w,f} (S_f^{\del}) \gr \mathsf{P}_f^{\del} \right) \cdot\gr\varphi\, dx\, dt = 0.
\end{split}
\end{equation}
{\it Furthermore, the initial condition is satisfied in the following sense:\\
\indent for any $ \varphi \in L^2(0,T;V) \cap W^{1,1}(0,T;L^1(\Omega))$ such that $\varphi(\cdot, T) = 0$ in $\Omega$,}
\begin{equation}
\label{H-weak-init}
\begin{split}
\int\limits_{0}^T \langle \pt_t \big(\Phi^{\del} S_f^{\del}
+ \,\big(\mathcal{P}(S_f^{\del}) & - \mathcal{P}(S_f^{0}) \big) \ast \omega^\del  \big),
\varphi \rangle  dt \\
+ \int\limits_{\Omega_T} &
\left( \Phi^{\del} ( S_f^{\del} - S_f^0 ) + \,\big(\mathcal{P}(S_f^{\del}) - \mathcal{P}(S_f^{0}) \big) \ast \omega^\del \right) \pt_t \varphi\, dx\, dt = 0.
\end{split}
\end{equation}
\end{definition}

The existence of a weak solution from Definition~\ref{def-weak-delta-lin}
under conditions \Ab{1}--\Ab{5} follows from the result of \cite{Arb-exist}, Theorem~1.
Our goal is to pass to the limit as $\del \to 0$ in the system
\eqref{H-delta-lin-GP}-\eqref{H-15}.

\subsection{Uniform a priori estimates}
\label{sect:pass_delta-subsect:unif_apr_est}

First we establish the following uniform estimates.

\begin{proposition}
\label{Prop-uniform-est}
Let $\del>0$.
Let $(\mathsf{P}_f^{\del}, \theta_{f}^{\del})$
be a weak solution of the problem \eqref{H-delta-lin-GP}-\eqref{H-15}.
The following estimates, uniform with respect to $\del$, hold:
\begin{equation}
\label{unif-est-GP-1}
\left.
\begin{array}[c]{cc}
\Vert \mathsf{P}_f^{\del} \Vert_{L^2(0,T;H^1(\Omega))} + \Vert \theta_f^{\del} \Vert_{L^2(0,T;H^1(\Omega))} \leq C,\\
\end{array}
\right.
\end{equation}
\begin{equation}
\label{unif-est-GP-2}
\left.
\begin{array}[c]{cc}
\ds
\Vert \frac{1}{\del}\, \pt_t\, \big( \Phi^{\del} S_f^{\del} + \,\big(\mathcal{P}(S_f^{\del}) - \mathcal{P}(S_f^{0}) \big) \ast \omega^\del \big) \Vert_{L^2(0,T;V')} \leq C.
\end{array}
\right.
\end{equation}
\end{proposition}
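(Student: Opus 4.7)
The proof rests on three scaling observations: $\Phi^\del = d\Phi_f\del + O(\del^2)$ by \eqref{H-0}, the $\del$-coercivity $\hat{k}_m^1\del|\bs{\xi}|^2 \leq \mathbb{K}^{\star,\del}\bs{\xi}\cdot\bs{\xi} \leq \hat{k}_m^2\del|\bs{\xi}|^2$ from \eqref{H-2-unif-5-0}, and $\|\omega^\del(x,\cdot)\|_{L^1(0,T)} \leq 4\del\,C_m^{max}\sqrt{T}$ following from \eqref{w_Ddelta-max}. Consequently, every term in the natural energy identity for \eqref{H-delta-lin-GP} is of order $\del$, and dividing by $\del$ at the end will produce the claimed uniform bounds. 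The plan is to prove \eqref{unif-est-GP-1} by testing \eqref{H-16-weak} and \eqref{H-17-weak} with $\mathsf{P}_f^\del - P_\Gamma$ and $\theta_f^\del - \theta_\Gamma$ respectively, and then to read \eqref{unif-est-GP-2} directly off \eqref{H-17-weak}.

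Testing \eqref{H-16-weak} with $\mathsf{P}_f^\del - P_\Gamma$, using $\lm_f(S_f^\del) \geq L_0$ from \Ab{4} together with the lower bound on $\mathbb{K}^{\star,\del}$ and absorbing the cross term against $\gr P_\Gamma$ via Young's inequality, yields $\del\,\|\gr\mathsf{P}_f^\del\|_{L^2(\Omega_T)}^2 \leq C\del\,\|\gr P_\Gamma\|_{L^2(\Omega_T)}^2$. Next, testing \eqref{H-17-weak} with $\theta_f^\del - \theta_\Gamma$ produces the coercive diffusion contribution $\hat{k}_m^1\del L_0\,\|\gr\theta_f^\del\|_{L^2}^2$, a cross term containing $\gr\mathsf{P}_f^\del$ controlled by Young and the previous bound, and boundary data contributions of order $\del$ by \Ab{5}. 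The time derivative $\pt_t(\Phi^\del S_f^\del)$ paired against $\theta_f^\del = \beta_f(S_f^\del)$ is treated by the chain rule with the primitive $\widetilde{\mathcal{B}}_f(s) := \int_0^s \beta_f(\sigma)\,d\sigma$, giving $\Phi^\del\,[\widetilde{\mathcal{B}}_f(S_f^\del(T)) - \widetilde{\mathcal{B}}_f(S_f^0)] = O(\del)$; the contribution against $-\theta_\Gamma$ is transferred by integration by parts in time to an integral involving $\pt_t\theta_\Gamma \in L^1(\Omega_T)$ from \Ab{5}, itself of order $\del$.

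The main obstacle is the convolution contribution $\int_0^T \langle \pt_t\big((\mathcal{P}(S_f^\del) - \mathcal{P}(S_f^0))\ast \omega^\del\big),\, \theta_f^\del - \theta_\Gamma\rangle\,dt$, which must be treated without Steklov discretization of the time derivative, as emphasized in the Introduction. Set $g := \mathcal{P}(S_f^\del) - \mathcal{P}(S_f^0)$; by \Ab{3} we have $g(\cdot,0) = 0$, and $g$ is uniformly bounded since $S_f^\del, S_f^0 \in [0,1]$, giving the pointwise estimate $\|g\ast\omega^\del\|_{L^\infty(\Omega_T)} \leq \|g\|_\infty\,\|\omega^\del\|_{L^1(0,T)} = O(\del)$. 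I would split $\theta_f^\del - \theta_\Gamma = (\beta_f(S_f^\del) - \beta_f(S_f^0)) + (\beta_f(S_f^0) - \theta_\Gamma)$; the second, essentially time-independent piece is handled by integration by parts in time, pairing the $L^\infty$ bound on $g\ast\omega^\del$ with $\pt_t\theta_\Gamma \in L^1$. For the first piece, the crucial ingredient is the positive-definite structure of the Abel kernel $1/\sqrt{t}$: identifying $\pt_t(g\ast\omega^\del) = \sqrt{\pi}\,D^\del\,D^{1/2}_{RL}g$ with the Riemann--Liouville fractional derivative $D^{1/2}_{RL}$ and using $g(\cdot,0)=0$, one has the classical positivity $\int_0^T\!\!\int_\Omega g\,\pt_t(g\ast\omega^\del)\,dx\,dt \geq 0$; combined with the fact that both $\beta_f$ and $\mathcal{P}$ are monotone increasing on $[0,1]$, so that $\beta_f(S_f^\del) - \beta_f(S_f^0)$ has pointwise the same sign as $g$, the resulting term retains the correct sign and can be discarded from the lower bound up to an $O(\del)$ remainder controlled by $\|g\ast\omega^\del\|_\infty$.

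Collecting everything at the $\delta$-weighted level and dividing by $\del$ produces \eqref{unif-est-GP-1}. For \eqref{unif-est-GP-2}, equation \eqref{H-17-weak} gives the distributional identity
\begin{equation*}
\pt_t\big(\Phi^\del S_f^\del + (\mathcal{P}(S_f^\del) - \mathcal{P}(S_f^0))\ast\omega^\del\big) = \div\big(\mathbb{K}^{\star,\del}(\lm_f(S_f^\del)\gr\theta_f^\del + \lm_{w,f}(S_f^\del)\gr\mathsf{P}_f^\del)\big)
\end{equation*}
in $L^2(0,T;V')$; its $V'$-norm is bounded by $\|\mathbb{K}^{\star,\del}\|_\infty\,(\|\gr\theta_f^\del\|_{L^2(\Omega_T)} + \|\gr\mathsf{P}_f^\del\|_{L^2(\Omega_T)}) \leq C\del$ by the upper bound in \eqref{H-2-unif-5-0} together with \eqref{unif-est-GP-1}, so dividing by $\del$ yields \eqref{unif-est-GP-2}. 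The single subtle step in the whole argument is the convolution estimate above; everything else is a standard energy computation made to close by the uniform $\delta$-scaling of all coefficients.
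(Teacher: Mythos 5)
Your overall strategy --- testing with $\mathsf{P}_f^\del - P_\Gamma$ and $\theta_f^\del - \theta_\Gamma$, tracking the common $O(\del)$ scaling of every term via \eqref{H-0}, \eqref{H-2-unif-5-0} and \eqref{w_Ddelta-max}, and reading \eqref{unif-est-GP-2} off \eqref{H-17-weak} --- is the same as the paper's, and most of the bookkeeping (the primitive $\widetilde{\mathcal{B}}_f(s)=\int_0^s\beta_f$ is fine, equivalent by Legendre duality to the paper's $H(\theta)=\int_0^\theta r\,\mathcal{B}_f'(r)\,dr$; the boundary-data terms via $\pt_t\theta_\Gamma\in L^1$ are handled identically). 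The gap is precisely at the single step you flag as ``the crucial ingredient,'' and it is a genuine one.

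You reduce the convolution contribution to showing that
\[
\int_0^T\!\!\int_\Omega\big(\beta_f(S_f^\del)-\beta_f(S_f^0)\big)\,
\pt_t\!\left[\big(\mathcal{P}(S_f^\del)-\mathcal{P}(S_f^0)\big)\ast\omega^\del\right]dx\,dt
\]
is nonnegative (up to an $O(\del)$ remainder), and argue from two ingredients: (i) the classical positivity of the quadratic form $\int_0^T g\,\pt_t(g\ast\omega^\del)\,dt\ge 0$ for the Abel kernel with $g(0)=0$, and (ii) the observation that $\phi:=\beta_f(S_f^\del)-\beta_f(S_f^0)$ has pointwise the same sign as $g:=\mathcal{P}(S_f^\del)-\mathcal{P}(S_f^0)$. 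But (i)+(ii) do not imply nonnegativity of $\int_0^T\phi\,\pt_t(g\ast\omega^\del)\,dt$. The classical positivity is a statement about pairing a function against \emph{its own} fractional derivative; it is nonlocal in time and is destroyed if you replace one copy of $g$ by a different same-sign function (consider, say, $\phi$ equal to $g$ on $[0,T/2]$ and $10g$ on $[T/2,T]$). A pointwise sign match cannot substitute for the quadratic structure.

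What makes the term sign-definite here is not the pointwise sign but the stronger fact that $\phi$ is a nondecreasing \emph{function of} $g$ vanishing at $g=0$ (both $\beta_f$ and $\mathcal{P}$ are increasing functions of the same argument $S_f^\del$, so $\phi=G(g)$ with $G$ nondecreasing, $G(0)=0$). With that structure one can invoke a nonlinear Staffans/Gripenberg-type positivity lemma for completely monotone kernels: for $a$ nonnegative, nonincreasing, convex, $G$ nondecreasing with $G(0)=0$, and $v(0)=0$, $\int_0^T G(v)\,\pt_t(a\ast v)\,dt\ge 0$. You neither state nor cite such a lemma, and your written argument goes through a weaker (false in general) implication. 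The paper's proof takes a different and more elementary route to the same sign information: it integrates by parts twice in $\tau$ and reduces the matter to the pointwise monotonicity $\pt_\tau h^\del(\tau)\le 0$ of the correlation-type quantity $h^\del(\tau)=\int_\tau^T\mathcal{P}(S_f^\del(t-\tau))\,\theta_f^\del(t)\,dt$, citing \cite{Arb-exist}; this is where the co-monotonicity of $\mathcal{P}$ and $\beta_f$ in $S_f^\del$ actually enters. So your high-level idea (a kernel positivity argument replacing Steklov discretization) is the right one, but the lemma you invoke is the wrong one, and the sign argument you give to bridge the gap does not close it.
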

\begin{proof}
We first insert $\zeta = \mathsf{P}_f^{\del} - P_{\Gamma}$ into the equation \eqref{H-16-weak}.
This yields
\begin{equation}
\label{H-discrete-3-1}
\int\limits_{\Omega_T} \lm_{f} (S_f^{\del})\mathbb{K}^{\star, \del} |\gr \mathsf{P}_f^{\del}|^2\, dx\, dt =
\int\limits_{\Omega_T} \lm_{f} (S_f^{\del})\mathbb{K}^{\star, \del} \gr \mathsf{P}_f^{\del} \cdot \gr P_{\Gamma}\, dx\, dt.
\end{equation}
Taking into account the representation \eqref{H-2-unif-5} of the tensor $\KSD$, we get
\begin{equation}
\label{H-discrete-4-1}
\begin{split}
\int\limits_{\Omega_T} \lm_{f} (S_f^{\del})\K^{\star} |\gr \mathsf{P}_f^{\del}|^2\, dx\, dt & =
\frac{1}{|Y_f^\del|}\int\limits_{\Omega_T} \lm_{f} (S_f^{\del})\KSD \gr \mathsf{P}_f^{\del} \cdot \gr P_{\Gamma}\, dx\, dt\\
&- \int\limits_{\Omega_T} \lm_{f} (S_f^{\del}) \bar{\K}^{\del} |\gr \mathsf{P}_f^{\del}|^2\, dx\, dt,
\end{split}
\end{equation}
and by applying \eqref{H-2-unif-5-1} and \Ab{4},
we finally obtain
\begin{equation}
\label{H-discrete-5-1}
\Vert \gr \mathsf{P}_f^{\del} \Vert_{L^2(\Omega_T)} \leq C,
\end{equation}
with a constant $C$ which is independent of $\del$.

Now we choose $\varphi = \theta_f^\del - \theta_{\Gamma}$ in \eqref{H-17-weak}. This yields
\begin{equation}
\label{H-apr-1}
\begin{split}
\ds
\int_0^T \langle \pt_t (\Phi^\del S_f^\del +
[ (\mathcal{P}(S_f^\del) -  \mathcal{P}(S_{f}^0))*\omega^{\del} ]), \theta_f^\del - \theta_\Gamma\rangle dt \\ 
\ds
+ \int_{\Omega_T} \mathbb{K}^{\star,\del} \lm_f(S_f^\del) \gr \theta_f^\del \cdot \gr \theta_f^\del \, dx dt
+ \int_{\Omega_T} \mathbb{K}^{\star,\del} \lm_{w,f}(S_f^\del)\gr \mathsf{P}_f^{\del} \cdot  \gr \theta_f^\del \, dx dt \\ 
\ds
= \int_{\Omega_T} \mathbb{K}^{\star,\del} \lm_f(S_f^\del) \gr \theta_f^\del \cdot \gr \theta_\Gamma\, dx dt
+ \int_{\Omega_T} \mathbb{K}^{\star,\del} \lm_{w,f}(S_f^\del)\gr \mathsf{P}_f^{\del} \cdot \gr \theta_\Gamma\, dx dt. \\ 
\end{split}
\end{equation}
The integral terms in the equality \eqref{H-apr-1} are denoted by $X_1, X_2, \ldots, X_5$, respectively.
Assume for the moment that the function $S_f^\del$ is sufficiently regular in time. Then we can write
 $X_1 = Y_1 + Y_2$. For $Y_1$, by \Ab{5}, we have:
\begin{equation}
\label{H-apr-2}
\begin{split}
\ds
Y_1 \eqdef & \int_0^T \int_\Omega  \pt_t (\Phi^\del S_f^\del) (\theta_f^\del - \theta_\Gamma)\, dx dt\\
    = & \int_\Omega \Phi^\del ( H(\theta_f^\del(T)) -S_f^\del(T) \theta_\Gamma(T) )\, dx
     -  \int_\Omega \Phi^\del (H(\theta_f^\del(0)) -S_f^\del(0)\theta_\Gamma(0))\, dx \\
      + & \int_0^T \int_\Omega \Phi^\del S_f^\del \pt_t \theta_\Gamma\, dx dt
\geq  - \Phi^\del\, \left( 4\, \theta_f^*\, |\Omega| + \Vert \pt_t \theta_\Gamma \Vert_{L^1(\Omega_T)} \right),
\end{split}
\end{equation}
where the function $H$ is defined by
 \[ \ds  H(\theta) \eqdef \int_0^\theta {\cal B}_f'(r) r\, dr. \]
Moreover, it is easy to see,
\[
  \ds |H(\theta_f^\del)| = |{\cal B}_f(\theta_f^\del) \theta_f^\del - \int_0^{\theta_f^\del} {\cal B}_f(r)\, dr|
\leq 2\, \theta_f^*.
\]
For $Y_2$ part of the $X_1$ we obtain, using integration by parts,
\[
  Y_2 \eqdef  \int_0^T \int_\Omega
  \pt_t \left( (\mathcal{P}(S_f^\del) - \mathcal{P}(S_{f}^0))*\omega^{\del} \right) (\theta_f^\del - \theta_\Gamma) \, dx  dt
=   Y_2^1 + Y_2^2 + Y_2^3,
  \]
with
\begin{equation*}
\label{H-apr-3-01}
|Y_2^1| 
        \leq 2\, \del\, C_m^{max}, \quad
|Y_2^3| 
        \leq 4\, \del\, \sqrt{T}\, C_m^{max}\, \Vert \pt_t \theta_{\Gamma} \Vert_{L^1(\Omega_T)}
\end{equation*}
and
\begin{equation*}
\label{H-apr-3-02}
Y_2^2 \eqdef - \int_0^T \int_\Omega \int_0^t \left( \mathcal{P}(S_f^\del(t-\tau)) - \mathcal{P}(S_{f}^0) \right)\,
         \omega^{\del}(\tau)\, d\tau\, \pt_t \theta_f^\del(t)\, dt\, dx.
\end{equation*}

By changing the order of the time integration and integrating by parts in the term $Y_2^2$, we can write
$\ds Y_2^2 = Y_2^{2,1} + Y_2^{2,2}$, where
\begin{equation*}
\label{H-apr-3-03}
|Y_2^{2,1}| \leq 8\, \del\, C_m^{max}\, \theta_f^*\, |\Omega|\, \sqrt{T}
\end{equation*}
and
\begin{equation*}
\label{H-apr-3-04}
Y_2^{2,2} \eqdef \int_0^T \int_\Omega \int_{\tau}^T \pt_t \mathcal{P}(S_f^\del(t-\tau)) \, \theta_f^\del(t)\ dt \,
         \omega^{\del}(\tau)\, dx\, d\tau.
\end{equation*}
Finally, the term $Y_2^{2,2}$ can be written as $\ds Y_2^{2,2} = A + B,$
where
\begin{equation*}
\label{H-apr-3-05}
|A| \leq 4\, \del\, C_m^{max}\, \theta_f^*\, |\Omega|\, \sqrt{T}
\end{equation*}
and
\begin{equation*}
\label{H-apr-3-06}
B \eqdef - \int_0^T \int_\Omega \pt_{\tau}
      \left( \int_{\tau}^T \mathcal{P}(S_f^\del(t-\tau)) \, \theta_f^\del(t)\ dt \right)\,
      \omega^{\del}(\tau)\, dx\, d\tau.
\end{equation*}
It can be proved, as in \cite{Arb-exist}, that for any $\tau \in [0,T]$ it holds
\begin{equation}
\label{h-del-1}
h^{\del}(\tau) \eqdef \int_{\tau}^T \mathcal{P}(S_f^\del(t-\tau)) \, \theta_f^\del(t)\ dt \leq h(0)
\end{equation}
and from \eqref{h-del-1} it follows that
\begin{equation}
\label{h-del-2}
\pt_{\tau} h(\tau) \leq 0 \textrm{ in } [0,T].
\end{equation}
Then  $B \geq 0$
which gives $Y_2^{2,2} \geq A \geq - 4\, \del\, C_m^{max}\, \theta_f^*\, |\Omega|\, \sqrt{T}.$
Summing all the obtained inequalities, we have for the first term in \eqref{H-apr-1} the estimate:
\begin{align*}
   X_1 \geq - C \left( \Phi^\del + \del \right),
\end{align*}
where constant $C$ depends on
$C_m^{max}$, $|\Omega|$, $T$, $\theta_f^*$, $\Vert \pt_t \theta_{\Gamma} \Vert_{L^1(\Omega_T)}$.
These calculations are applicable for regularized in time $S_f^\del$ but they remain true
for the desired $S_f^\del$ by a passage to the limit as the regularization parameter tends to $0$.

We treat the terms $X_2$,\ldots,$X_5$ in a standard way using the Cauchy-Schwartz inequality and
the already obtained estimate \eqref{H-discrete-5-1}.
Finally, we  have
\begin{equation}
\label{H-apr-9}
\ds
L_0\, \del\, \hat{k}_m^1\, \| \gr \theta_f^{\del}\|^2_{L^2(\Omega_T)}
\leq C\, \del + C\, \del\, \hat{k}_m^2 \, (1 + \| \gr \theta_f^{\del}\|_{L^2(\Omega_T)})
\end{equation}
and, therefore,
\begin{equation*}
\label{H-apr-10}
\| \gr \theta_f^{\del}\|_{L^2(\Omega_T)} \leq C,
\end{equation*}
with a constant $C$ which is independent of $\del$.
The estimate \eqref{unif-est-GP-2} follows in the standard way from \eqref{unif-est-GP-1}.
This completes the proof of  Proposition~\ref{Prop-uniform-est}.

\end{proof}

\begin{lemma}
\label{lemma-mod_cont-cont}
There exists a constant $C$ which is independent of $\del$ and $h$ such that, as $h\to0$,
\begin{equation}
\label{comp-5}
\int_{h}^T\,\int_{\Omega}\,
\big(S_f^\del(x,t) - S_f^\del(x,t-h)\big)\big( \theta_f^\del(x,t) - \theta_f^\del(x,t-h)  \big)\,dx\,dt \leq \, C\,\sqrt{h}.
\end{equation}
\end{lemma}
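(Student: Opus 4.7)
The strategy is an Alt--Luckhaus-type time-translation test, tailored to the memory term. Set $U^\del \eqdef \Phi^\del S_f^\del + K^\del\ast\omega^\del$ with $K^\del \eqdef \mathcal{P}(S_f^\del) - \mathcal{P}(S_f^0)$, and $\vec F^\del \eqdef \mathbb{K}^{\star,\del}(\lm_f(S_f^\del)\gr\theta_f^\del + \lm_{w,f}(S_f^\del)\gr \mathsf{P}_f^\del)$, so that the saturation equation in \eqref{H-delta-lin-GP} reads $\pt_t U^\del = \mbox{div}\,\vec F^\del$ weakly. This gives the time-difference identity
\[
\int_\Omega \big(U^\del(\cdot,t) - U^\del(\cdot,t-h)\big)\,\varphi\,dx = -\int_{t-h}^t\!\!\int_\Omega \vec F^\del(\cdot,\tau)\cdot\gr\varphi\,dx\,d\tau
\]
for any $\varphi\in V$ and a.e.\ $t\in(h,T)$. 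Writing $\Delta_h u(t) \eqdef u(t)-u(t-h)$, I would insert $\varphi = \Delta_h\theta_f^\del(\cdot,t)-\Delta_h\theta_\Gamma(\cdot,t) \in V$ and integrate in $t$ over $(h,T)$; the $\Delta_h\theta_\Gamma$ contribution is a lower-order $O(\del h)$ term, using $\|\Delta_h\theta_\Gamma\|_{L^1(\Omega_T)} \leq h\|\pt_t\theta_\Gamma\|_{L^1(\Omega_T)}$ from \Ab{5}.

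The flux integral on the right is controlled via Fubini and Cauchy--Schwarz, exploiting $|\mathbb{K}^{\star,\del}| = O(\del)$ (cf.~\eqref{H-2-unif-5-0}) together with \Ab{4} and \eqref{unif-est-GP-1} to obtain $\|\vec F^\del\|_{L^2(\Omega_T)} \leq C\del$ and
\[
\Big|\int_h^T\!\!\int_{t-h}^t\!\!\int_\Omega \vec F^\del(\cdot,\tau)\cdot\gr \Delta_h\theta_f^\del(\cdot,t)\,dx\,d\tau\,dt\Big| \leq h\,\|\vec F^\del\|_{L^2(\Omega_T)}\,\|\gr\Delta_h\theta_f^\del\|_{L^2(\Omega_T)} \leq C\del\,h.
\]
On the left I split $\Delta_h U^\del = \Phi^\del\Delta_h S_f^\del + \Delta_h(K^\del\ast\omega^\del)$. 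Monotonicity of $\beta_f$ (hence of $S_f^\del\mapsto\theta_f^\del$) gives $(\Delta_h S_f^\del)(\Delta_h\theta_f^\del)\geq 0$ pointwise, so the first piece is the non-negative quantity I want to bound. For the memory contribution, the substitution $\sigma = t-\tau$ produces
\[
\Delta_h(K^\del\ast\omega^\del)(t) = D^\del\!\Big[\int_0^{h}\frac{K^\del(t-\sigma)}{\sqrt\sigma}\,d\sigma + \int_0^{t-h}K^\del(t-h-\sigma)\Big(\frac{1}{\sqrt{\sigma+h}} - \frac{1}{\sqrt\sigma}\Big)d\sigma\Big],
\]
and since $|K^\del|$ is bounded by \Ab{3} and $D^\del\leq 2\del\,C_m^{max}$ by \eqref{w_Ddelta-max}, elementary integration shows each piece is $O(\sqrt h)$, yielding the pointwise bound $\|\Delta_h(K^\del\ast\omega^\del)\|_{L^\infty(\Omega_T)} \leq C\del\sqrt h$ and hence $\big|\int\Delta_h(K^\del\ast\omega^\del)\Delta_h\theta_f^\del\,dx\,dt\big| \leq C\del\sqrt h$.

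Collecting, I obtain $\Phi^\del\int_h^T\!\int_\Omega \Delta_h S_f^\del\,\Delta_h\theta_f^\del\,dx\,dt \leq C\del\,(h+\sqrt h) \leq C\del\sqrt h$ for $h\leq 1$, and dividing by $\Phi^\del/\del = d\Phi_f + O(\del)$ (cf.~\eqref{H-0}) delivers \eqref{comp-5}. The main technical obstacle is the singular memory kernel: a naive bound on $\Delta_h(K^\del\ast\omega^\del)$ diverges, and the $\sqrt h$ modulus---rather than the $h$ rate one would expect from a smooth kernel---is intrinsic to the $t^{-1/2}$ weight; the decomposition into a ``short-time'' piece on $(0,h)$ and a ``shifted-kernel'' piece involving $1/\sqrt{\sigma+h}-1/\sqrt\sigma$ is the decisive computation. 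A subsidiary technicality is that the use of $\Delta_h\theta_f^\del$ as a test function formally requires a time-regularization of $S_f^\del$ followed by a passage to the limit, exactly as in the proof of Proposition~\ref{Prop-uniform-est}.
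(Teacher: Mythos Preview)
Your proposal is correct and follows essentially the same route as the paper. The paper packages the argument by inserting the \emph{time-averaged} test function
\[
\varphi^{\del,h}(x,t)=\int_{\max(t,h)}^{\min(t+h,T)}\big[\Delta_h\theta_f^\del(x,\tau)-\Delta_h\theta_\Gamma(x,\tau)\big]\,d\tau
\]
directly into \eqref{H-17-weak} and then invokes the Fubini identity \eqref{comp-6}, whereas you first write the pointwise-in-$t$ increment identity and then integrate over $(h,T)$; the two formulations are interchangeable precisely by \eqref{comp-6}. Your decomposition of $\Delta_h(K^\del\ast\omega^\del)$ into a short-time piece on $(0,h)$ and a shifted-kernel piece is the same splitting the paper uses for its quantity $X_h^\del$ in \eqref{comp-10} (with the roles of the two convolution factors exchanged), and both yield the pointwise $O(\del\sqrt h)$ bound that drives the final estimate. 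The remaining steps---the $O(\del h)$ bound on the flux term via \eqref{H-2-unif-5-0} and \eqref{unif-est-GP-1}, the $O(\del h)$ control of the $\Delta_h\theta_\Gamma$ contribution via \Ab{5}, and the division by $\Phi^\del\geq c\del$---coincide with the paper's treatment.
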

\begin{proof}
Let us first note that for integrable functions $G_1, G_2$ and for $0 < h < T/2$ it holds
\begin{equation}
\label{comp-6}
  \int_0^T G_1(t) \int_{\max(t,h)}^{\min(t+h,T)} G_2(\tau)d\tau dt = \int_h^T G_2(t) \int_{t-h}^t G_1(\tau)d\tau dt.
\end{equation}
We define the test function in  \eqref{H-17-weak} by
\begin{equation*}
\ds
\varphi = \varphi^{\del,h}(x,t)
= \int_{\max(t,h)}^{\min(t+h,T)} \left( ( \theta_f^\del(x,\tau) - \theta_\Gamma(x,\tau) ) -
(\theta_f^\del(x,\tau-h) - \theta_\Gamma(x,\tau-h)) \right) \, d\tau.
\end{equation*}
Then $\varphi \in L^2(0,T;V)$. Plugging it in \eqref{H-17-weak} we have:
\begin{equation}
\label{comp-8}
\begin{split}
\int\limits_{0}^T\langle \pt_t(\Phi^\del S_f^\del + [ \mathcal{P}(S_f^\del) - & \mathcal{P}(S_f^0) ] *  \omega^\del ), \varphi^{\del,h} \rangle dt = \\
- &  \int_{\Omega_T}  \mathbb{K}^{*,\del} ( \lm_f(S_f^\del) \gr \theta_f^\del
  + \lm_{w,f}(S_f^\del) \gr \mathsf{P}_f^{\del} )\cdot \gr \varphi^{\del,h}\, dx dt.
\end{split}
\end{equation}
By using \eqref{comp-6} the left-hand side term can be written as
\begin{equation}
\label{comp-9}
\begin{split}
\int\limits_{0}^T\langle \partial_t(\Phi^\del S_f^\del + &
 [ \mathcal{P}(S_f^\del) -  \mathcal{P}(S_f^0) ] * \omega^\del ), \varphi^{\del,h}\rangle dt \\
\ds = & \int_h^T\int_\Omega \Phi^\del (S_f^\del(x,t)- S_f^\del(x,t-h))(\theta_f^\del(x,t)-\theta_f^\del(x,t-h))\, dx dt \\
\ds - & \int_h^T\int_\Omega \Phi^\del (S_f^\del(x,t)- S_f^\del(x,t-h))(\theta_\Gamma (x,t)- \theta_\Gamma(x,t-h))\, dx dt \\
\ds + & \int_h^T\int_\Omega [(\theta_f^\del(x,t) - \theta_f^\del(x,t-h))- (\theta_\Gamma  (x,t) - \theta_\Gamma  (x,t-h))]X^\del_h(x,t)\, dx dt,
\end{split}
\end{equation}
where
\begin{equation}
\label{comp-10}
\begin{split}
   X^\del_h (x,t)
  &= ([ \mathcal{P}(S_f^\del) -  \mathcal{P}(S_f^0) ] * \omega^\del) (x,t) -
 ([ \mathcal{P}(S_f^\del) -  \mathcal{P}(S_f^0) ] * \omega^\del) (x,t-h)\\
 &=  \int_h^t [  \mathcal{P}(S_f^\del(x,t-\tau) -  \mathcal{P}(S_f^0(x)) ][\omega^\del(\tau) -\omega^\del(\tau -h)]\, d\tau \\
 & +\int_0^h [  \mathcal{P}(S_f^\del(x,t-\tau) -  \mathcal{P}(S_f^0(x)) ]\omega^\del(\tau)\, d\tau.
\end{split}
\end{equation}
Let us denote the integral terms at the right-hand side of the equality \eqref{comp-9} by $Z_1, Z_2, Z_3$,
respectively.
For $Z_2$ we have by using \Ab{5}:
\begin{equation}
\label{comp-11}
\begin{split}
  | Z_2|\, \leq & \,  \Phi^\del\  \int_h^T\int_\Omega|\theta_\Gamma  (x,t) - \theta_\Gamma  (x,t-h)|\, dx dt
  \leq  h\, \Phi^\del\,  \Vert \pt_t \theta_\Gamma\Vert_{L^1(\Omega_T)}
   \leq  C \, h\, \del\,  \Vert \pt_t \theta_\Gamma\Vert_{L^1(\Omega_T)},
\end{split}
\end{equation}
since $\Phi^\del/\del \leq C$, uniformly with respect to $\del$.
Next, due to $0\leq \mathcal{P} \leq 1$ and $\omega^\del > 0$ we have
\begin{align*}
  | X^\del_h(x,t) | \leq  2 \int_0^h \omega^\del(\tau)\, d\tau = 4 \, \del \, C_m^{max}\, \sqrt{h}
\end{align*}
and therefore
\begin{equation}
\label{comp-12}
  | Z_3 | \leq  8 \vert \Omega_T \vert \, \theta_f^* \,  C_m^{max}\, \del\, \sqrt{h} .
\end{equation}
Finally, we apply \eqref{comp-6} with $$G_1(t)\equiv1,\
G_2(\tau)=| (\gr \theta_f^\del(x,\tau) - \gr \theta_\Gamma(x,\tau) ) -
( \gr \theta_f^\del(x,\tau-h) - \gr \theta_\Gamma(x,\tau-h) ) |^2$$
to establish
\begin{equation}
\label{comp-13}
\ds
  \Vert \gr \varphi^{\del,h}\Vert_{L^2(\Omega_T)} \leq
  2\, h \Vert \gr (\theta_f^\del - \theta_\Gamma) \Vert_{L^2(\Omega_T)}\leq C\,h,
\end{equation}
where we have used the uniform a priori estimate \eqref{unif-est-GP-1}.
From estimate \eqref{comp-13} and the uniform bounds \eqref{unif-est-GP-1} we hence obtain
\begin{equation}
\label{comp-14}
\begin{split}
  \left | \int_{\Omega_T} \mathbb{K}^{*,\del} ( \lm_f(S_f^\del) \gr \theta_f^\del + \lm_{w,f}(S_f^\del) \gr \mathsf{P}_f^{\del} )\cdot
 \gr \varphi^{\del,h}\, dx dt \right|
 \leq C\, h\, \del\, \hat{k}_m^2.
\end{split}
\end{equation}
Collecting the estimates \eqref{comp-11}, \eqref{comp-12}, \eqref{comp-14}, 
from \eqref{comp-8} we get
\begin{align*}
\int_h^T\int_\Omega \Phi^\del (S_f^\del(x,t)&- S_f^\del(x,t-h))(\theta_f^\del(x,t) - \theta_f^\del(x,t-h))\, dx dt
 \leq   C\, \del\, \sqrt{h}.
 \end{align*}
Now, the desired estimate follows from $\Phi^\del \geq c \del$, for some $c$ independent of $\del$,
and the monotonicity of $S\mapsto \beta_f(S)$.

\end{proof}

\subsection{The fully homogenized model}
\label{sect:full_homogen}

In this subsection we present the fully homogenized model for immiscible incompressible
two-phase flow in double porosity media with thin fractures.
First we state the convergence results holding as $\del\to0$.

\begin{theorem}
\label{tm-compact}
Let assumptions \Ab{1}--\Ab{5} be fulfilled.
Let $(\mathsf{P}_f^{\del}, \theta_{f}^{\del})$
be a weak solution of the problem
\eqref{H-delta-lin-GP}-\eqref{H-15}
and let $S_f^{\del} = \mathcal{B}_f(\theta_f^{\del})$.
Then there exist functions $\mathsf{P}_f\in L^2(0,T;V) + P_{\Gamma}$ and $\theta_f\in L^2(0,T;V) + \theta_{\Gamma}$
such that, up to a subsequence, it holds
\begin{align}
\label{comp-1}
\mathsf{P}_f^{\del}(x, t) & \tow \mathsf{P}_f (x, t) \quad {\rm \ weakly\ in\ }\,\,  L^2(0, T;  H^1(\Omega)),\\
\label{comp-2}
\theta_f^{\del}(x, t) & \tow \theta_f (x, t) \quad {\rm \ weakly\ in\ }\,\,  L^2(0, T;  H^1(\Omega))
\end{align}
as $\del\to0$. Moreover,  $0 \leq \theta_f(x,t) \leq \theta_f^{\star}$ a.e. in $\Omega_T.$
Furthermore,
\begin{equation}
\label{comp-3-2}
S_f^{\del}(x,t) \to S_f(x,t) \quad {\rm \ strongly\ in\ }\,\, L^2(\Omega_T) {\rm \ and\ a.e.\ in\ }\,\, \Omega_T,
\end{equation}
where $S_f = \mathcal{B}_f(\theta_f).$
Here $(\mathsf{P}_f, \theta_f)$ is a weak solution in $\Omega_T$ of problem:
\begin{equation}
\label{FH-1}
\left\{
\begin{array}[c]{ll}
\ds
{\rm div}\, \left( \lm_{f} (S_f){k}^{\star} \gr P_{f} \right) = 0, \\[2mm]
\ds
\Phi_f \frac{\pt S_f}{\pt t}
- {\rm div}\, \bigg( {k}^{\star} \left( \lm_{f} (S_f) \gr \theta_{f}
+ \lm_{w,f} (S_f) \gr P_{f} \right) \bigg) \\[2mm]
\ds \mbox{}\qquad\qquad \qquad =
- \frac{C_m(x)}{d}\,\frac{\pt}{\pt t}\, \big[\,\big(\mathcal{P}(S_f) - \mathcal{P}(S_f^{0}) \big)
\ast \frac{1}{\sqrt{t}} \big], \\
S_f = \mathcal{B}_f(\theta_f). 
\end{array}
\right.
\end{equation}
The boundary conditions for system \eqref{FH-1} are given by:
\begin{equation}
\label{FH-4}
\left\{
\begin{array}[c]{ll}
P_{f} = P_{\Gamma} \quad {\rm and} \quad \theta_{f} = \theta_{\Gamma} \quad {\rm on} \,\,
\Gamma_{inj} \times (0, T), \\[2mm]
 k^{\star} \lm_{f}(S_f) \gr P_{f} \cdot \bs{\nu} = 0 \quad {\rm on} \,\,\Gamma_{imp} \times (0, T),
\\[2mm]
 k^{\star} \left( \lm_{f} (S_f) \gr \theta_{f}
+ \lm_{w,f} (S_f) \gr P_{f} \right) \cdot \bs{\nu} = 0 \quad {\rm on} \,\,
\Gamma_{imp} \times (0, T),
\end{array}
\right.
\end{equation}
and the initial condition is
\begin{equation}
\label{FH-5}
\theta_f(x, 0) = \theta^{0}_f(x) \,\, {\rm in} \,\, \Omega.
\end{equation}
The effective permeability tensor $k^{\star}$ and the function $C_m$ are given by \eqref{H-2-unif-5-1}
and \eqref{w_Ddelta-def}.
\end{theorem}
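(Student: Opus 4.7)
The overall strategy combines the $\del$-uniform a priori bounds from Proposition~\ref{Prop-uniform-est} with the time-translation estimate of Lemma~\ref{lemma-mod_cont-cont} to extract convergent subsequences, and then rescales the weak formulation by the natural small factor $|Y_f^\del|=d\del+O(\del^2)$ in order to pass to the limit term by term in \eqref{H-16-weak}--\eqref{H-weak-init}.

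From \eqref{unif-est-GP-1} and reflexivity of $L^2(0,T;H^1(\Omega))$ one first extracts subsequences with the weak convergences \eqref{comp-1}--\eqref{comp-2}. Continuity of the trace operator transfers the Dirichlet data to the limit, putting $\mathsf{P}_f-P_\Gamma$ and $\theta_f-\theta_\Gamma$ into $L^2(0,T;V)$; the pointwise bounds $0\le\theta_f^\del\le\theta_f^\star$ pass through weak convergence to $0\le\theta_f\le\theta_f^\star$ a.e.

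Next, since $S_f^\del=\mathcal{B}_f(\theta_f^\del)$ with $\mathcal{B}_f$ continuous and monotone, strong convergence \eqref{comp-3-2} reduces to showing a.e.\ convergence of $\theta_f^\del$, after which the uniform $L^\infty$ bound together with continuity of $\mathcal{B}_f$ yields the $L^2$ statement by dominated convergence. For this I would invoke a Fr\'echet--Kolmogorov / Simon-type compactness criterion applied to $\theta_f^\del$: spatial equicontinuity is given directly by the uniform bound \eqref{unif-est-GP-1}, while uniform equicontinuity of time translations of $\theta_f^\del$ in $L^1(\Omega_T)$ is extracted from Lemma~\ref{lemma-mod_cont-cont}, using the pointwise nonnegativity of the integrand $(\beta_f(S)-\beta_f(S'))(S-S')$, the uniform $L^\infty$ bound on the factors, and a standard Alt--Luckhaus-type argument that exploits the modulus of continuity of $\mathcal{B}_f$ to convert the weighted bound $\le C\sqrt{h}$ into a genuine time-translation estimate.

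In the final step I divide the weak formulation by $|Y_f^\del|$ and use the asymptotics $\Phi^\del/|Y_f^\del|\to\Phi_f$, $\KSD/|Y_f^\del|\to k^\star\,\mathbb{I}$ and $\omega^\del(x,t)/|Y_f^\del|\to(C_m(x)/d)\,t^{-1/2}$ following from \eqref{H-0}, \eqref{H-2-unif-5}--\eqref{H-2-unif-5-1} and \eqref{w_Ddelta-def} respectively. Strong convergence of $S_f^\del$ combined with continuity and boundedness of $\lm_f,\lm_{w,f},\mathcal{P}$ gives strong $L^p(\Omega_T)$ convergence (for any $p<\infty$) of $\lm_f(S_f^\del)$, $\lm_{w,f}(S_f^\del)$ and $\mathcal{P}(S_f^\del)$; coupled with the weak convergence of $\gr\mathsf{P}_f^\del$, $\gr\theta_f^\del$ this allows passage to the limit in the nonlinear products of \eqref{H-16-weak}--\eqref{H-17-weak}. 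The time-derivative term is treated through the test-function formulation \eqref{H-weak-init}, which transfers $\pt_t$ onto smooth compactly supported test functions, so that only convergence of $\Phi^\del S_f^\del/|Y_f^\del|+[(\mathcal{P}(S_f^\del)-\mathcal{P}(S_f^0))\ast\omega^\del]/|Y_f^\del|$ in, say, $L^1(\Omega_T)$ is needed. The main obstacle lies precisely here: although $t^{-1/2}$ is locally integrable it is singular at $0$, so one must combine Young's convolution inequality with the uniform dominating bound $D^\del(x)/|Y_f^\del|\le 2C_m^{\max}/d$ from \eqref{w_Ddelta-max} and the strong $L^2$ convergence of $\mathcal{P}(S_f^\del)$ to conclude that the rescaled convolution converges to $(\mathcal{P}(S_f)-\mathcal{P}(S_f^0))\ast[(C_m(x)/d)\,t^{-1/2}]$ in $L^1(\Omega_T)$. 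Collecting all limits and identifying the initial data through \eqref{H-weak-init} yields the fully homogenized system \eqref{FH-1}--\eqref{FH-5}.
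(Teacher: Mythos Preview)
Your overall strategy matches the paper's proof closely: uniform bounds from Proposition~\ref{Prop-uniform-est} give the weak convergences, Lemma~\ref{lemma-mod_cont-cont} combined with an Alt--Luckhaus argument gives strong convergence of $S_f^\del$, and rescaling by $d\del$ (equivalently $|Y_f^\del|$) allows passage to the limit in the weak formulation.

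The one place where your route diverges is the compactness step. You aim first for a.e.\ convergence of $\theta_f^\del$ via a Kolmogorov--Simon criterion, and only afterwards deduce convergence of $S_f^\del=\mathcal{B}_f(\theta_f^\del)$. The difficulty is that Lemma~\ref{lemma-mod_cont-cont} controls only the product $(S_f^\del(t)-S_f^\del(t-h))(\theta_f^\del(t)-\theta_f^\del(t-h))$; since $\beta_f'=\alpha_f$ vanishes at the endpoints and is not assumed bounded near $s=0$ under \Ab{3}--\Ab{4}, this does not straightforwardly yield an $L^1$ time-translation estimate for $\theta_f^\del$ itself. The paper avoids this detour entirely: it quotes the Alt--Luckhaus lemma (Lemma~1.9 of \cite{AL}, restated inside the proof) in the form ``weak convergence of $u_\del$ in $L^2(0,T;H^1)$ together with the product estimate \eqref{comp-5} implies strong $L^2(\Omega_T)$-convergence of $F(u_\del)$'', applied with $u_\del=\theta_f^\del$ and $F=\mathcal{B}_f$. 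This gives $S_f^\del\to S_f$ in one stroke, with no need to first establish compactness of $\theta_f^\del$; the identity $S_f=\mathcal{B}_f(\theta_f)$ and the bounds on $\theta_f$ then follow a posteriori.

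For the time-derivative term the paper also takes a slightly different path: rather than transferring $\pt_t$ onto test functions and arguing $L^1$-convergence of the rescaled convolution, it uses the bound \eqref{unif-est-GP-2} to extract a weak limit $\Psi$ of $\del^{-1}\pt_t(\Phi^\del S_f^\del+[\mathcal{P}(S_f^\del)-\mathcal{P}(S_f^0)]*\omega^\del)$ in $L^2(0,T;V')$, and then identifies $\Psi$ from the strong convergence of $S_f^\del$. Both approaches work here, but the paper's route is shorter and simultaneously delivers the regularity $\pt_t\big(\Phi_f S_f+\tfrac{C_m}{d}[\mathcal{P}(S_f)-\mathcal{P}(S_f^0)]*t^{-1/2}\big)\in L^2(0,T;V')$ that is part of the limit weak formulation.
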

\begin{proof}
Weak convergences in \eqref{comp-1} and \eqref{comp-2} follow from \eqref{unif-est-GP-1}.
The boundedness of $\theta_f^{\del}$ and $S_f = \mathcal{B}_f(\theta_f)$ follows directly
from the strong convergence \eqref{comp-3-2}.
In order to prove \eqref{comp-3-2} we use Lemma~\ref{lemma-mod_cont-cont}
and Lemma~1.9 from \cite{AL}, which we repeat for reader's convenience:
\begin{lemma}
\label{lemma-AL-modif}
Suppose that the sequence $(u_{\del})_{\del}$ converges weakly to $u$ in $L^2(0,T;H^1(\Omega))$.
Let $F$ be a continuous, monotone and bounded function in $\R$.
Assume that
\begin{equation}
\label{comp-16}
\int_{h}^T\,\int_{\Omega}\,
\big(F(u_\del(x,t)) - F(u_\del(x,t-h)\big) \big( u_\del(x,t) - u_\del(x,t-h)  \big)\, dx\, dt \leq \, C\, \varpi(h),
\end{equation}
for some continuous function $\varpi$ such that $\varpi(0)=0$,
and with a constant $C$  independent of $h$ and $\del$.
Then $F(u_\del)$ converges to $F(u)$ strongly in $L^2(\Omega_T)$.
\end{lemma}

Now we apply Lemma~\ref{lemma-AL-modif} to the sequence $(\theta_f^\del)_{\del}$
in the role of $(u_\del)_{\del}$.
The conditions on the function $F(z)=\mathcal{B}_f(z)$ in Lemma~\ref{lemma-AL-modif}
hold from the definition of $\mathcal{B}_f$.

Due to \eqref{unif-est-GP-2}, up to a subsequence we have
\begin{equation}
\label{comp-conv_h-1}
\frac{1}{\del} \pt_t\left(\Phi^\del S_f^\del + \big( \mathcal{P}(S_f^\del) - \mathcal{P}(S_f^0) \big) * \omega^\del \right)
\tow \Psi \quad\text{\rm \ weakly in\ }\,\, L^2(0,T;V')
 \end{equation}
for some $\Psi \in L^2(0,T;V')$.
The strong convergence of $S_f^{\del}$ in \eqref{comp-3-2} allows to identify the limit as
\[ \Psi = \pt_t\left(d\Phi_f S_f + C_m(x)\, [ \mathcal{P}(S_f) - \mathcal{P}(S_f^0) ] * \frac{1}{\sqrt{t}} \right) \in L^2(0,T;V'). \]
We can now pass to the limit as $\del\to 0$ in the equations \eqref{H-16-weak}, \eqref{H-17-weak} and \eqref{H-weak-init}, after
division by $d\,\del$, and obtain  for any $\zeta, \varphi \in L^2(0,T;V)$:
\begin{equation}
\label{H-16-weak-final}
\int\limits_{\Omega_T} \lm_{f} (S_f){k}^{\star} \gr \mathsf{P}_f \cdot \gr\zeta\, dx\, dt = 0,
\end{equation}
\begin{equation}
\label{H-17-weak-final}
\begin{split}
  \int\limits_{0}^T \langle  \pt_t \Big( \Phi_f S_f + \frac{C_m(x)}{d}
  [ \mathcal{P}(S_f) & - \mathcal{P}(S_f^0) ] * \frac{1}{\sqrt{t}} \Big) , \varphi \rangle  dt\\
&+
\int\limits_{\Omega_T} {k}^{\star} \left( \lm_{f} (S_f) \gr \theta_{f}
+ \lm_{w,f} (S_f) \gr \mathsf{P}_f \right) \cdot\gr\varphi\, dx\, dt = 0.
\end{split}
\end{equation}
In the initial condition  we take the test function $ \varphi \in L^2(0,T;V) \cap W^{1,1}(0,T;L^1(\Omega))$ such that $\varphi(\cdot, T) = 0$ in $\Omega$,
and obtain
\begin{equation}
\label{H-weak-init-final}
\begin{split}
  \int\limits_{0}^T \langle  \pt_t \Big( \Phi_f S_f +  & \frac{C_m(x)}{d} [ \mathcal{P}(S_f) - \mathcal{P}(S_f^0) ] * \frac{1}{\sqrt{t}} \Big) , \varphi \rangle  dt\\
+ & \int\limits_{\Omega_T}
\left( \Phi_f ( S_f - S_f^0 ) + \frac{C_m(x)}{d} \big(\mathcal{P}(S_f) - \mathcal{P}(S_f^{0}) \big) \ast \frac{1}{\sqrt{t}} \right) \pt_t \varphi\, dx\, dt = 0.
\end{split}
\end{equation}

Finally, we see that obtained equations \eqref{H-16-weak-final},
\eqref{H-17-weak-final} and \eqref{H-weak-init-final} represent
a weak formulation of the problem \eqref{FH-1}-\eqref{FH-5}.
This completes the proof of Theorem~\ref{tm-compact}.

\end{proof}

Finally, we can transform the system \eqref{FH-1} into the phase formulation
by reintroducing the phase pressures as follows:
\begin{equation*}
\left\{
\begin{array}[c]{ll}
\ds
\Phi_f \frac{\pt S_f}{\pt t}
-{\rm div}\, \left( {k}^{\star} \lm_{w,f} (S_f) \gr P_{w,f} \right) =
- \frac{C_m(x)}{d}\,\frac{\pt}{\pt t}\, \big[\,\big(\mathcal{P}(S_f) - \mathcal{P}(S_f^{0}) \big)
\ast \frac{1}{\sqrt{t}} \big] \,\,  {\rm in}\,\, \Omega_{T}, \\[6mm]
\ds
-\Phi_f \frac{\pt S_f}{\pt t}
-{\rm div}\, \left( {k}^{\star} \lm_{n,f} (S_f) \gr P_{n,f} \right) =
\frac{C_m(x)}{d}\,\frac{\pt}{\pt t}\, \big[\,\big(\mathcal{P}(S_f) - \mathcal{P}(S_f^{0}) \big)
\ast \frac{1}{\sqrt{t}} \big] \,\,  {\rm in}\,\, \Omega_{T}.
\end{array}
\right.
\end{equation*}
The boundary conditions for this system are given by:
\begin{equation*}
\left\{
\begin{array}[c]{ll}
P_{w,f} = P_{w,\Gamma} \quad {\rm and} \quad P_{n,f} = P_{n,\Gamma} \quad {\rm on} \,\,
\Gamma_{inj} \times (0, T), \\[3mm]
- k^{\star} \lm_{w,f}(S_f) \gr P_{w,f} \cdot \bs{\nu} =
- k^{\star} \lm_{n,f}(S_f) \gr P_{n,f} \cdot \bs{\nu} = 0 \quad {\rm on} \,\,
\Gamma_{imp} \times (0, T),\\
\end{array}
\right.
\end{equation*}
and the initial condition is
\begin{equation*}
S_f(x, 0) = S^{0}_f(x) \,\, {\rm in} \,\, \Omega.
\end{equation*}
We note that this model is fully homogenized, and the effective coefficients of the model
are given by simple relations (see \eqref{H-2-unif-5-1} and \eqref{w_Ddelta-def}):
$k^\star = (d-1)k_f/d$ and $C_m(x) = {6 \sqrt{\Phi_m k_m \psi_m(x)}}/{\sqrt{\pi}}$.

\section*{Acknowledgments}
This work was partially supported by University of Zagreb, grant 202600.
Most of the work on this paper was done when
Leonid Pankratov was visiting Department of Mathematics, Faculty of Science, University of Zagreb.
We thank Faculty of Science for hospitality.




\begin{thebibliography}{00}


\bibitem{All92} G. Allaire,
{\em Homogenization and two--scale convergence},
SIAM J. Math. Anal., {\bf 23} (6) (1992), pp.~1482--1518.

\bibitem{AL} H.W. Alt, S. Luckhaus,
{\em Quasilinear elliptic--parabolic differential equations},
Math. Z., {\bf 183} (3) (1983), pp.~311--341.

\bibitem{ABGP} B. Amaziane, A. Bourgeat, M. Goncharenko, L. Pankratov,
{\em Characterization of the flow for a single fluid in an excavation damaged zone},
Comptes Rendus - Mecanique, {\bf 332} (1) (2004), pp.~79--84.

\bibitem{AGP} B. Amaziane, M. Goncharenko, L. Pankratov,
{\em Homogenization of a degenerate triple porosity model with thin fissures},
European Journal of Applied Mathematics, {\bf 16} (3) (2005), pp.~335--359.

\bibitem{AMPP} B. Amaziane, J. P. Mili\v{s}i\' c, M. Panfilov, L. Pankratov,
{\em Generalized nonequilibrium capillary relations for two--phase flow through heterogeneous media},
Phys. Rev. E, {\bf 85}, 016304 (2012), 18 pp.

\bibitem{APP1} B. Amaziane, L. Pankratov, A. Piatnitski,
{\em Homogenization of a class of quasilinear elliptic equations in high--contrast fissured media},
Royal Society of Edinburgh - Proceedings A, {\bf 136} (6) (2006), pp.~1131--1155.

\bibitem{APP2} B. Amaziane, L. Pankratov, A. Piatnitski,
{\em Homogenization of a single phase flow through a porous medium in a thin layer},
Math. Models Methods Appl. Sci., {\bf 17} (9) (2007), pp.~1317--1349.

\bibitem{APP} B. Amaziane, L. Pankratov, A. Piatnitski,
{\em The existence of weak solutions to immiscible compressible two--phase flow in porous media:
the case of fields with different rock--types},
Discrete Contin. Dyn. Syst. Ser. B, {\bf 18} (5) (2013), pp.~1217--1251.

\bibitem{APR} B. Amaziane, L. Pankratov, V. Rybalko,
{\em On the homogenization of some double--porosity models with periodic thin structures},
Appl. Anal., {\bf 88} (12) (2009), pp.~1469--1492.

\bibitem{AKM} S.~N. Antontsev, A.~V. Kazhikhov, V.~N. Monakhov,
{\em Boundary value problems in mechanics of nonhomogeneous fluids},
North-Holland Publishing Co., Amsterdam, 1990.
(Translated from the original russian edition: Nauka, Novosibirsk, 1983).

\bibitem{Arb-simpl} T. Arbogast,
{\em A simplified dual--porosity model for two--phase flow},
in Computational Methods in Water Resources IX, Vol. 2 (Denver, CO, 1992):
Mathematical Modeling in Water Resources, T.F. Russell, R.E. Ewing, C.A. Brebbia, W.G. Gray, and
G.F. Pindar, eds., Comput. Mech., Southampton, U.K., 1992, pp. 419--426.

\bibitem{Arb-exist} T. Arbogast,
{\em The existence of weak solutions to single porosity and simple dual--porosity models of two--phase incompressible flow},
Nonlinear Anal., {\bf 19} (11) (1992), pp.~1009--1031.

\bibitem{ADH90} T. Arbogast, J. Douglas, U. Hornung,
{\em Derivation of the double porosity model of single phase flow via homogenization theory},
SIAM J. Math. Anal., {\bf 21} (4) (1990), pp.~823--836.

\bibitem{ADH91} T. Arbogast, J. Douglas, U. Hornung,
{\em Modeling of naturally fractured reservoirs by formal homogenization techniques},
in Frontiers in Pure and Applied Mathematics, R. Dautray, ed.,
North-Holland, Amsterdam, 1991, pp. 1--19.

\bibitem{BakhPan} N. S. Bakhvalov, G. P. Panasenko,
{\em Averaging processes in periodic media},
Nauka, Moscow, 1984; English transl., Kluwer, Dordrecht, 1989.

\bibitem{BZK} G.I. Barenblatt, I.P. Zheltov, I.N. Kochina,
{\em Basic concepts in the theory of seepage of homogeneous liquids in fissured rocks},
Prikl. Math. Mekh., {\bf 24} (5) (1960), pp.~852--864.

\bibitem{JB-YB} J. Bear, Y. Bachmat,
{\em Introduction to Modeling of Transport Phenomena in Porous Media},
Kluwer Academic Publishers, London, 1991.

\bibitem{BCP} A. Bourgeat, G. A. Chechkin, A. Piatnitski,
{\em Singular double porosity model},
Appl. Anal., {\bf 82} (2) (2003), pp.~103--116.

\bibitem{BH1} A. Bourgeat, A. Hidani,
{\em A result of existence for a model of two--phase flow in a porous medium made of different rock types},
Appl. Anal., {\bf 56} (3-4) (1995), pp.~381--399.

\bibitem{BLM} A. Bourgeat, S. Luckhaus, A. Mikeli\'c,
{\em Convergence of the homogenization process for a double--porosity model of immiscible two-phase flow},
SIAM J. Math. Anal., {\bf 27} (6) (1996), pp.~1520--1543.

\bibitem{CP} C. Canc\`es, M. Pierre,
{\em An existence result for multidimensional immiscible two--phase flows with discontinuous capillary pressure field},
SIAM J. Math. Anal., {\bf 2} (44) (2012), pp.~966--992.

\bibitem{CJ} G. Chavent, J. Jaffr\'e,
{\em Mathematical models and finite elements for reservoir simulation},
North-Holland, Amsterdam, 1986.

\bibitem{Choq1} C. Choquet,
{\em Derivation of the double porosity model of a compressible miscible displacement in naturally fractured reservoirs},
Appl. Anal., {\bf 83} (5) (2004), pp.~477--499.

\bibitem{Cio-Pau} D. Cioranescu, J. Saint Jean Paulin,
{\em Homogenization of Reticulated Structures},
Springer, 1999.

\bibitem{Griso} G. Griso,
{\em Thin reticulated structures},
in {\it Progress in partial differential equations. The Metz Surveys 3.},
ed. M. Chipot, J. Saint Jean Paulin, and I. Shafrir,
Pitman, London, 1994, 161--182.

\bibitem{Griso-1} G. Griso,
{\it Analyse Asymptotique de Structures R\'{e}ticul\'{e}es},
Th\`{e}se de l'Universit\'e Pierre et Marie Curie (Paris VI)

\bibitem{Helm} R. Helmig,
{\em Multiphase flow and transport processes in the subsurface},
Springer, Berlin, 1997.

\bibitem{MK} V. A. Marchenko, E. Y. Khruslov,
{\em Homogenization of Partial Differential Equations},
Birkhauser, Boston, 2006.

\bibitem{Moj} M. Rasoulzadeh,
{\it Modeles non Locaux des Ecoulements en Milieux Poreux et Fractures Multi-echelles},
Th\`{e}se de Nancy-Universit\'e

\bibitem{Pan} G. P. Panasenko,
{\em Homogenization of lattice-type domains: L-convergence},
in {\it Nonlinear Partial Differential Equations and their Applications},
Coll\`{e}ge de France Seminar, vol. XIII,
ed. D. Cioranescu, J.-L. Lions,
Pitman Research Notes in Mathematics Series, 391,
Harlow: Longman, 1998, 259--280.

\bibitem{Panf} M. Panfilov,
{\em Macroscale models of flow through highly heterogeneous porous media},
Kluwer Academic Publishers, Dordrecht-Boston-London, 2000.

\bibitem{PP} L. Pankratov, A. Piatnitski,
{\em Nonlinear double porosity type model [Un mod\`{e}le non lin\'{e}aire de type double porosit\'e]},
Comptes Rendus Mathematique, {\bf 334} (5) (2002), pp.~435--440.

\bibitem{PR} L. Pankratov, V. Rybalko,
{\em Asymptotic analysis of a double porosity model with thin cracks},
Mat. Sb., {\bf 194} (1) (2003), pp.~121--146. (in Russian);
English translation in {\it Sb. Math.}, {\bf 194} (1-2) (2003), pp.~123--150.

\bibitem{Yeh02} Li--Ming Yeh,
{\em On two--phase flow in fractured media},
Math. Models Methods Appl. Sci., {\bf 12} (8) (2002), pp.~1075--1107.

\bibitem{Yeh06} Li--Ming Yeh,
{\em Homogenization of two--phase flow in fractured media},
Math. Models Methods Appl. Sci., {\bf 16} (10) (2006), pp.~1627--1651.

\bibitem{WR} J. Warren, P. Root,
{\em The behavior of naturally fractured reservoirs},
Soc. Petroleum Eng. J., {\bf 3} (3) (1963), pp.~245--255.


\end{thebibliography}
\end{document}